\numberwithin{equation}{section}
\newtheorem{Theorem}{Theorem}[section]
\newtheorem{Lemma}[Theorem]{Lemma}
\newtheorem{Remark}[Theorem]{Remark}
\newtheorem{Example}[Theorem]{Example}
\newtheorem{Proposition}[Theorem]{Proposition}
\newtheorem{Definition}[Theorem]{Definition}
\newtheorem{Corollary}[Theorem]{Corollary}
\def\Mr{M^{(r)}}
\def\Yr{Y^{(r)}}
\def\bYr{\boldsymbol{Y}^{(r)}}
\def\bMr{\boldsymbol M^{(r)}}
\def\Lr{L^{(r)}}
\def\Tr{T^{(r)}}
\def\gammai{\gamma^{(i)}}
\def\Z{\mathbb{Z}}
\def\eqd{\stackrel{d}{=}}
\def\nuhatstand{\hat\nu_{\text{standard},n}}
\def\cond{\,|\,}
\def\Starica{St\u{a}ric\u{a} }
\def\Rootzen{Rootz\'en }
\def\SaS{S$\alpha$S\ }
\def\one{{\bf 1}}
\def\Fbar{{\overline F}}
\def\lst{\buildrel{\rm st}\over{\leqs}}
\def\Leb{\text{{\it Leb}}}
\def\essot{s_T^{(0)} }
\def\supp{\text{supp} }
\def\Aff{\text{Aff} }
\def\bX{\boldsymbol X}
\def\bY{\boldsymbol Y}
\def\bU{\boldsymbol U}
\def\bZ{\boldsymbol Z}
\def\bu{\boldsymbol u}
\def\bone{\boldsymbol 1}
\def\ba{\boldsymbol a}
\def\boldb{\boldsymbol b}
\def\bq{\boldsymbol q}
\def\bs{\boldsymbol s}
\def\bt{\boldsymbol t}
\def\bzero{\boldsymbol 0}
\def\be{\boldsymbol e}
\def\bx{\boldsymbol x}
\def\bz{\boldsymbol z}
\def\boldb{\boldsymbol b}
\def\by{\boldsymbol y}
\def\bj{\boldsymbol j}
\def\bJ{\boldsymbol J}
\def\binfty{\boldsymbol \infty}
\def\bTheta{\boldsymbol \Theta}
\def\balpha{\boldsymbol \alpha}
\def\btheta{\boldsymbol \theta}
\def\E{\mathbb{E}}
\def\R{\mathbb{R}}
\def\C{\mathfrak{C}}
\def\V{\boldsymbol V}
\def\max{\vee}
\def\cadlag{c\`adl\`ag}
\newcommand{\bthe}{\begin{Theorem}}
\newcommand{\ethe}{\end{Theorem}}
\newcommand{\ble}{\begin{Lemma}}
\newcommand{\ele}{\end{Lemma}}
\newcommand{\bde}{\begin{Definition}}
\newcommand{\ede}{\end{Definition}}
\newcommand{\bco}{\begin{Corollary}}
\newcommand{\eco}{\end{Corollary}}
\newcommand{\bpr}{\begin{Proposition}}
\newcommand{\epr}{\end{Proposition}}
\newcommand{\brem}{\begin{Remark}}
\newcommand{\erem}{\end{Remark}}
\newcommand{\bexam}{\begin{Example}}
\newcommand{\eexam}{\end{Example}}
\newcommand{\beqq}{\begin{equation}}
\newcommand{\eeqq}{\end{equation}}
\newcommand{\beao}{\begin{eqnarray*}}
\newcommand{\eeao}{\end{eqnarray*}\noindent}
\newcommand{\beam}{\begin{eqnarray}}
\newcommand{\eeam}{\end{eqnarray}\noindent}
\newcommand{\barr}{\begin{array}}
\newcommand{\earr}{\end{array}}
\newcommand{\bproof}{\begin{proof}}
\newcommand{\eproof}{\end{proof}}
\newcommand{\sid}[1]{{\color{black} #1}}
\newcommand\independent{\protect\mathpalette{\protect\independenT}{\perp}}
\def\independenT#1#2{\mathrel{\rlap{$#1#2$}\mkern2mu{#1#2}}}
\begin{document}

\bibliographystyle{plain}

\title[$r$th Largest]{Processes of $r$th Largest}

\author[B. Buchmann]{Boris Buchmann}
\address{Dr. Boris Buchmann\\Research School of Finance,
  Actuarial Studies \&  Statistics,\\ 
  Australian National University, ACT 0200, Australia}
\email{Boris.Buchmann@anu.edu.au}

\author[R. Maller]{Ross Maller}
\address{Prof. Ross Maller,\\
Research School of Finance, Actuarial Studies \&  Statistics,\\
Australian National University, Canberra, ACT,Australia,}
\email{Ross.Maller@anu.edu.au}

\author[S.I.\ Resnick]{Sidney I.\ Resnick}
\address{Prof. Sidney Resnick\\
School of Operations Research and Information Engineering\\
Cornell University \\
Ithaca, NY 14853 USA}
\email{sir1@cornell.edu}

\thanks{This
research was initiated and partially supported by ARC grants DP1092502
and DP160104737. S. Resnick also received significant 
support from US Army MURI grant
  W911NF-12-1-0385 to Cornell University; Resnick gratefully acknowledges 
  hospitality, administrative support and space during several visits to the Research School
  of Finance, Actuarial Studies \&  Statistics,
Australian National University.} 

\keywords{}



\begin{abstract} {For integers $n\geq r$, we treat the $r$th largest
    of a sample of size $n$ as an $\R^\infty$-valued stochastic
    process in $r$ which we  denote $\bMr$. 
    We show that the sequence regarded in this way
    satisfies the Markov property. We go on to study the asymptotic behaviour of $\bMr$ as $r\to\infty$, and, borrowing from classical 
extreme value theory, show that  left-tail domain of
    attraction conditions  on the underlying distribution of the
    sample guarantee  weak limits for both the range
    of $\bMr$ and $\bMr$ itself, after norming and centering.  
 In continuous time, an analogous
    process $\bYr$ based on a two-dimensional Poisson process on
    $\R_+\times \R$ is treated similarly, but we find that the
    continuous time problems have a distinctive additional feature:
   there are always infinitely many points below the $r$th highest point up to time $t$ for any $t>0$. This necessitates a different approach to the asymptotics in this case. 
}
\end{abstract}
\maketitle

\section{Introduction}\label{sec:intro}
In this paper we consider Markovian and other properties of the order statistics of iid random variables in discrete time, and of extremal processes in continuous time. Although venerable these are important issues and research continues to throw up significant new aspects. 
As a starting point let $\Mr_n$ be  the $r$th largest among iid random variables $X_1,\dots,X_n$ with cdf $F$. (Precise specifications of the order statistics will be given later.) It is known that the {\it finite} sequence $(\Mr_n)_{r=1,2,\ldots,n}$ is Markov if and only if 
$F$ is continuous on $(\ell_F,r_F)$, where  $\ell_F$ and $r_F$ are the left and right extremes of $F$ (see 
\citep{arnold:becker:gather:zahedi:1984}). This is a result concerning 
 the first $r$ order statistics. We proceed to investigate the infinitely many order statistics 
$(\Mr_n, n \geq r)$ beyond the $r$th, and further, derive properties of the whole collection $\{\bMr=(\Mr_n, n \geq r), r\geq 1\}$, considered as an $\R^\infty$-valued stochastic process. Apart from their intrinsic interest the properties we derive bring together 
a number of areas and techniques, as we discuss later.

Thus,  we begin in Section \ref{sec:Markov} by  setting up the notation required for, then proving, the Markovian property, that the conditional distribution of the infinite sequence 
$(M_{r+1}^{(r+1)}, M_{r+2}^{(r+1)}, \ldots)$,
 knowing all values
 $(M_1^{(1)}, M_2^{(1)}, \ldots)$,
 $(M_2^{(2)}, M_3^{(2)}, \ldots)$, $\ldots$, 
 $(M_{r}^{(r)}, M_{r+1}^{(r)}, \ldots)$,
 is the same as  the
conditional distribution  knowing only
 $(M_{r}^{(r)}, M_{r+1}^{(r)}, \ldots)$.
 No continuity assumptions on $F$ are required for this.
 
In Section \ref{sec:asymp} we turn to an investigation of asymptotic properties of the  collection $\bMr$,
for large values of $r$. The weak convergence 
of $\bMr$, after norming and centering,  is related
to domain of attraction theory for the {\it minimum} of an iid sequence of rvs.
A key tool in these proofs is  Ignatov's \citep{ignatov:1977}  theorem showing that 
the $r$-records of an iid sequence are points of a Poisson random measure.

This study is continued in Section \ref {sec:cont_time} for continuous
time $r$th-order extremal processes. Some notable differences between
the discrete and continuous time situations emerge here.  In
particular, unlike in the discrete case, in the continuous time case
there are always infinitely many points below the currently considered
order statistic, and thus the convergence criterion has to be
modified.  Section \ref{sec:final} concludes the paper with some
modest final thoughts and open problems.

We conclude the present section by mentioning previous and related
work.  For alternative proofs and other background on Ignatov's (1977)
theorem see
\citep{ignatov:1977,stam:1985,goldie:rogers:1984,engelen:tommassen:vervaat:1988,
  resnickbook:2008}.  Other treatments of the Markov structure of the
finite sequence $(\Mr_n)_{r=1,2,\ldots,n}$ are in 
\citep{goldie:maller:1999},
\citep{ruschendorf:1985} and 
\citep{cramer:tran:2009}.
The latter two papers show that $(\Mr_n)_{r=1,2,\ldots,n}$ is Markov if information on tied values is incorporated into the sequence.
For background on continuous time extremal processes we refer to
\citep{resnick:1974,resnick:1975,resnickbook:2008,resnick:rubinovitch:1973}.
Additional references are given throughout the text.



\renewcommand{\L}{\mathcal{L}}
\def \K{\mathcal K}
\def \V{\mathcal V}
\def \cS{\mathcal S}
\def \cJ{\mathcal J}
\newcommand{\ov}{\overline}
\newcommand{\pibar}{\overline{\Pi}}
\newcommand{\wtilde}{\widetilde}

\newcommand{\abs}[1]{\left| #1 \right|}
\newcommand{\example}{\quad \emph{Example. }}
\newcommand{\examples}{\quad \emph{Examples. }}
\newcommand{\e}{\mathbf{E}} \newcommand{\p}{\mathbf{P}}
\newcommand{\w}[1]{\widehat{#1}}
\newcommand{\des}{\displaystyle}
\newcommand{\DoubleR}{\rm {I\ \nexto R}}
\newcommand{\todr}{\stackrel{\mathrm{D}}{\longrightarrow}}
\newcommand{\eqdr}{\stackrel{\mathrm{D}}}
\newcommand{\topr}{\stackrel{\mathrm{P}}{\longrightarrow}}
\newcommand{\veps}{\varepsilon}
\newcommand{\rmd}{{\rm d}}
\newcommand{\rmi}{{\rm i}}
\newcommand{\LL}{L\'evy }

\newcommand{\B}{{\bf B:}}
\newcommand{\MDA}{{\mbox{\rm MDA}}}
\newcommand{\DA}{{\mbox{\rm DA}}}
\newcommand{\stp}{\stackrel{P}{\rightarrow}}
\newcommand{\std}{\stackrel{d}{\rightarrow}}
\newcommand{\stdspace}[1]{\stackrel{d}{\rightarrow}_{#1}}
\newcommand{\stas}{\stackrel{\rm a.s.}{\rightarrow}}
\newcommand{\stj}{\stackrel{J_1}{\rightarrow}}
\newcommand{\stv}{\stackrel{v}{\rightarrow}}
\newcommand{\stw}{\stackrel{w}{\rightarrow}}
\newcommand{\halmos}{\quad\hfill\mbox{$\Box$}}
\newcommand{\DD}{\mathbb{D}}
\newcommand{\RR}{\mathbb{R}}
\newcommand{\MM}{\mathbb{M}}
\newcommand{\NN}{\mathbb{N}}
\newcommand{\ZZ}{\mathbb{Z}}
\newcommand{\CC}{\mathbb{C}}
\newcommand{\FF}{\mathbb{F}}
\newcommand{\PP}{\mathbb{P}}
\newcommand{\QQ}{\mathbb{Q}}
\newcommand{\UU}{\mathbb{U}}
\newcommand{\myPP}{\mathbb{P}}
\newcommand{\mySS}{\mathbb{S}}
\newcommand{\TT}{\mathbb{T}}
\newcommand{\JJ}{\mathbb{J}}
\newcommand{\II}{\mathbb{I}}
\newcommand{\XX}{\mathbb{X}}
\newcommand{\YY}{\mathbb{Y}}
\newcommand{\PPP}{{\cal P}}
\newcommand{\MMM}{{\cal M}}
\newcommand{\AAA}{{\cal A}}
\newcommand{\BBB}{{\cal B}}
\newcommand{\CCC}{{\cal C}}
\newcommand{\DDD}{{\cal D}}
\newcommand{\FFF}{{\cal F}}
\newcommand{\GGG}{{\cal G}}
\newcommand{\JJJ}{{\cal J}}
\newcommand{\LLL}{{\cal L}}
\newcommand{\EEE}{{\cal E}}
\newcommand{\SSS}{{\cal S}}
\newcommand{\TTT}{{\cal T}}
\newcommand{\HHH}{{\cal H}}
\newcommand{\III}{{\cal I}}
\newcommand{\KKK}{{\cal K}}
\newcommand{\NNN}{{\cal N}}
\newcommand{\OOO}{{\cal O}}
\newcommand{\RRR}{{\cal R}}
\newcommand{\UUU}{{\cal U}}
\newcommand{\VVV}{{\cal V}}
\newcommand{\WWW}{{\cal W}}
\newcommand{\ZZZ}{{\cal Z}}
\newcommand{\EEEE}{\mathfrak{E}}
\newcommand{\MMMM}{\mathfrak{M}}
\newcommand{\GGC}{GGC}
\newcommand{\VGGC}{VGG}
\newcommand{\Levy}{L\'evy }
\newcommand{\skal}[2]{\left\langle #1,#2\right\rangle}
\newcommand{\mylin}{\mbox{lin}}
\newcommand{\sign}{\mbox{sign}}
\newcommand{\var}{\mbox{var}}
\newcommand{\id}{\mbox{id}}
\newcommand{\eins}{{\bf 1}}
\newcommand{\eeee}{\mathfrak{e}}
\newcommand{\bfnull}{{\bf 0}}
\newcommand{\bfs}{{\bf s}}
\newcommand{\bft}{{\bf t}}
\newcommand{\bfx}{{\bf x}}
\newcommand{\bfy}{{\bf y}}
\newcommand{\bfd}{{\bf d}}
\newcommand{\bfalpha}{\boldsymbol{\alpha}}
\newcommand{\bfgamma}{\boldsymbol{\gamma}}
\newcommand{\bflambda}{\boldsymbol{\lambda}}
\newcommand{\bfmu}{\boldsymbol{\mu}}
\newcommand{\bfpi}{\boldsymbol{\pi}}
\newcommand{\bftheta}{\boldsymbol{\theta}}
\newcommand{\artanh}{\mbox{artanh}}
\newcommand{\rank}{\mbox{rank}}
\newcommand{\erank}{\mbox{\em rank}}
\newcommand{\ACC}{{\cal AC}_C}
\newcommand{\LLC}{{\cal L}^1_C}
\newcommand{\FIDI}{\stackrel{fdd}{\longrightarrow}}
\newcommand{\SKO}{\stackrel{Sko}{\longrightarrow}}
\newcommand{\vstd}{\stackrel{vague}{\longrightarrow}}
\newcommand{\wlimit}{\stackrel{w}{\longrightarrow}}
\newcommand{\vect}[2]{\left(\begin{array}{c}#1\\#2\end{array}\right)}
\newcommand{\myCov}{{\rm Cov}}
\newcommand{\myVar}{{\rm Var}}
\newcommand{\mymatr}[4]{\left(\begin{array}{cc}#1&#2\\#3&#4\end{array}\right)}
\newcommand{\vectdrei}[3]{\left(\begin{array}{c}#1\\#2\\#3\end{array}\right)}
\newcommand{\vectdreiwide}[3]{\left(\begin{array}{c}#1\\\\#2\\\\#3\end{array}\right)}
\newcommand{\diag}{\mbox{diag}}

\newcommand{\Sdpluswithnorm}{\mySS^d_{\|\cdot\|,+}}
\newcommand{\Sdplus}{\mySS^d_{+}}
\newcommand{\spur}{\mbox{trace}}
\newcommand{\BBsubseteq}[1]{
\mathrel{\rotatebox[origin=c]{#1}{$\subseteq$}}}
\newcommand{\BBarrow}[1]{
\mathrel{\rotatebox[origin=c]{#1}{$\boldsymbol{-\!\!\!-\!\!\!\blacktriangleright}
$}}}
\newcommand{\wt}{\widetilde}

\numberwithin{equation}{section}
\newtheorem{thm}{Theorem}[section]
\newtheorem{lem}{Lemma}[section]
\newtheorem{prop}{Proposition}[section]
\newtheorem{cor}{Corollary}[section]

\newtheorem{theorem}{Theorem}
\newtheorem{lemma}{Lemma}
\newtheorem{remark0}{\sc Remark}
\newenvironment{remark}{\begin{remark0}\em}{\end{remark0}\par}
\newtheorem{corollary}{Corollary}
\newtheorem*{reverse}{Reverse direction}
\newtheorem{definition}{Definition}
\newtheorem{proposition}{Proposition}
\numberwithin{theorem}{section}
\numberwithin{proposition}{section}
\numberwithin{lemma}{section}
\numberwithin{corollary}{section}
\numberwithin{remark0}{section}
\numberwithin{definition}{section}

\def\Mr{M^{(r)}}
\def\Yr{Y^{(r)}}
\def\bYr{\boldsymbol{Y}^{(r)}}
\def\bMr{\boldsymbol M^{(r)}}
\def\bm{\boldsymbol{m}}
\def\bmu{\boldsymbol{\mu}}
\def\Lr{L^{(r)}}
\def\Tr{T^{(r)}}
\def\gammai{\gamma^{(i)}}
\def\Z{\mathbb{Z}}
\def\eqd{\stackrel{d}{=}}
\def\nuhatstand{\hat\nu_{\text{standard},n}}
\def\cond{\,|\,}
\def\Starica{St\u{a}ric\u{a} }
\def\Rootzen{Rootz\'en }
\def\SaS{S$\alpha$S\ }
\def\one{{\bf 1}}
\def\Fbar{{\overline F}}
\def\lst{\buildrel{\rm st}\over{\leqs}}
\def\Leb{\text{{\it Leb}}}
\def\essot{s_T^{(0)} }
\def\supp{\text{supp} }
\def\Aff{\text{Aff} }
\def\bX{\boldsymbol X}
\def\bY{\boldsymbol Y}
\def\bU{\boldsymbol U}
\def\bZ{\boldsymbol Z}
\def\bu{\boldsymbol u}
\def\bone{\boldsymbol 1}
\def\ba{\boldsymbol a}
\def\boldb{\boldsymbol b}
\def\bq{\boldsymbol q}
\def\bs{\boldsymbol s}
\def\bt{\boldsymbol t}
\def\bzero{\boldsymbol 0}
\def\be{\boldsymbol e}
\def\bx{\boldsymbol x}
\def\bz{\boldsymbol z}
\def\boldb{\boldsymbol b}
\def\by{\boldsymbol y}
\def\bj{\boldsymbol j}
\def\bJ{\boldsymbol J}
\def\binfty{\boldsymbol \infty}
\def\bTheta{\boldsymbol \Theta}
\def\balpha{\boldsymbol \alpha}
\def\btheta{\boldsymbol \theta}
\def\E{\mathbb{E}}
\def\R{\mathbb{R}}
\def\C{\mathfrak{C}}
\def\V{\boldsymbol V}

\section{Markov Property of Higher Order Extremal
  Processes with Discrete Indexing}\label{sec:Markov}

\subsection{Indexing}\label{sec:ind}
Our analysis requires that we
keep track of infinite sequences indexed by $r$ where the first
members are being moved further out as $r$ increases. To cope with
this we use the idea of shifted sequences, with first members replaced
by $-\infty$. To see how this works,  
we start with the sequence space $\RR^\NN_{-\infty}:=\{\bx=(x_n):x_n\in\RR_{-\infty}\,,n\in\NN\}$ endowed
with the Borel field associated 
with the product topology.
(We employ the notations
$\NN=\{1,2,3,\dots\}$, $\RR_{-\infty}:=\RR\cup\{-\infty\}=[-\infty,\infty)$, and conventions $\sum_\emptyset=0$, $\prod_\emptyset:=1$,
$\pm\infty \times 0=0$.
Also $\RR^{\NN,\uparrow}_{-\infty}=\{\bx=(x_n)\in\RR^\NN_{-\infty}:\,x_{n}\le x_{n+1}\,,n\in\NN\}$
denotes the subset of nondecreasing sequences.)
The partial maxima operator $\bigvee:\RR^\NN_{-\infty}\mapsto \RR^{\NN,\uparrow}_{-\infty}$ maps a given sequence $\bx=(x_n)_n\in \RR^\NN_{-\infty}$ to its associated
sequence of partial maxima $\bigvee
\bx:=(\max\{x_1,\dots,x_n\})_n$. \sid{(In the statistical language R,
  this is known as {\it cummax\/}.)}

For a given sequence $\bx\in\RR_{-\infty}^\NN$ and $r\in\NN$, $n\geq r$, let $m_n^{(r)}$ be the $r$th largest of $x_1,\dots,x_n$, arranged in lexicographical order in case of ties.
 Then set 
$$x_n^{(r)}
=\begin{cases}
-\infty,& \text{ if }n<r;\\
m_n^{(r)},& \text{ if } n\geq r.
\end{cases}
$$
The {\em  extremal sequence of order $r$  associated with $\bx$}
 is the sequence $\bx^{(r)}\in\RR^{\NN,\uparrow}_{\infty}$, with finite elements $x_n^{(r)}$ augmented with $-\infty$ as follows: \begin{equation}\label{augseq}
 \bx^{(r)}=\bigl(\underbrace{-\infty,\dots,-\infty,}_{r-1 \text{
    entries}}m_n^{(r)},n\geq r\bigr).
\end{equation}
Write $\bx^{(0)}:=\bx$ for the extremal sequence of zero order. 
The extremal sequence
of unit order equals the partial maximum sequence: $\bx^{(1)}=\bigvee \bx$.

For a sequence $\bx=(x_n)_{n}\in\RR_{-\infty}^\NN$ the {\it shifted sequence} $\bx_{\mathcal{R}}$ is $  \bx_{\mathcal{R}}=(-\infty,\bx) \in \RR^\NN_{-\infty}$.
For two sequences $\bx=(x_n)_{n}$, $\by=(y_n)_{n}\in\RR_{-\infty}^\NN$, let
\[
\bx {}_{\mathcal{R}}\!\wedge\by:=
\{(-\infty)\eins_{n=1}+(x_{n-1}\wedge
y_n) \eins_{n>1}\}_n\in\RR_{-\infty}^\NN
\]
 be the componentwise minimum
of $\bx$ and $\by$, 
taken after shifting $\bx$ to the right with proper augmentation with
$-\infty$. Thus, componentwise, when $\bfx=(x_1,x_2,\ldots)$ and
 $\bfy=(y_1,y_2,\ldots)$, we have
\begin{equation*}
\bx_{\mathcal{R}}=(-\infty, x_1,x_2, \ldots)
\quad {\rm and}\quad
\bx {}_{\mathcal{R}}\!\wedge\by=(-\infty, x_1\wedge y_2, x_2\wedge y_3, \ldots).
\end{equation*}
For $n\in\NN$,  $y_n^{(1)}\ge y^{(2)}_n\ge \dots\ge y^{(n)}_n$ denotes
the order statistics  associated with (possibly extended)  real
numbers $y_1,\dots,y_n\in\RR_{-\infty}$. \sid{Clearly}, this notation is
consistent with the previous.

 In
Theorem \ref{thmswitchr},
we will
show a Markov property for the $r$th largest of an 
\sid{ iid sequence, and
since recursions are an effective tool for proving a sequence of random
elements is
Markovian, we first prove a preliminary result focussing on properties of the shifted sequences.}  

\begin{prop}\label{prop:rec}
For $r\in\NN$, we have the identity,
\begin{equation}\label{e:rec}
\bx^{(r+1)} =\bigvee (\bx^{(r)} {}_{\mathcal{R}}\!\wedge \bx)
\end{equation} 
or in component form,
\begin{equation}\label{e:recA}
x_n^{(r+1)}=\bigvee_{j=r+1}^n \Bigl(x_{j-1}^{(r)}\wedge x_j\Bigr),\quad r\in
\NN,\,n\geq r+1.
\end{equation}
\end{prop}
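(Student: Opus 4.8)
The plan is to establish the component form \eqref{e:recA}, from which the sequence identity \eqref{e:rec} follows immediately by unwinding the definitions of the shifted minimum ${}_{\mathcal{R}}\!\wedge$ and of the partial-maxima map $\bigvee$: the $n$th coordinate of $\bx^{(r)}{}_{\mathcal{R}}\!\wedge\bx$ equals $x_{n-1}^{(r)}\wedge x_n$ for $n\ge 2$ and $-\infty$ for $n=1$, so applying $\bigvee$ produces the coordinate $\bigvee_{j=2}^{n}(x_{j-1}^{(r)}\wedge x_j)$; since $x_{j-1}^{(r)}=-\infty$ whenever $j\le r$, those terms contribute nothing and the maximum effectively begins at $j=r+1$, giving exactly \eqref{e:recA} (both sides being $-\infty$ when $n\le r$). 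I would also note at the outset that, although tied values are indexed lexicographically, the quantity $x_n^{(r)}$ is just the $r$th largest \emph{value} among $x_1,\dots,x_n$ and hence depends only on the multiset of those values and not on the tie-breaking rule; as \eqref{e:recA} involves only these values together with the lattice operations $\wedge$ and $\vee$, ties will play no role in the argument.

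The first step is to reduce \eqref{e:recA} to a one-step recursion. Writing $S_n:=\bigvee_{j=r+1}^{n}(x_{j-1}^{(r)}\wedge x_j)$ for its right-hand side, one has trivially $S_{n+1}=S_n\vee(x_n^{(r)}\wedge x_{n+1})$ for $n\ge r+1$, with $S_{r+1}=x_r^{(r)}\wedge x_{r+1}$. Thus \eqref{e:recA} will follow by induction on $n$ once I show that the left-hand side satisfies the same recursion,
\begin{equation*}
x_{n+1}^{(r+1)}=x_n^{(r+1)}\vee\bigl(x_n^{(r)}\wedge x_{n+1}\bigr),\qquad n\ge r+1,
\end{equation*}
together with the matching base case $n=r+1$: here $x_{r+1}^{(r+1)}$ is the smallest of $x_1,\dots,x_{r+1}$, namely $x_1\wedge\cdots\wedge x_{r+1}$, while $x_r^{(r)}=x_1\wedge\cdots\wedge x_r$ gives $S_{r+1}=x_r^{(r)}\wedge x_{r+1}=x_1\wedge\cdots\wedge x_{r+1}$, so the two agree.

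The heart of the argument is the one-step identity, which describes how the $(r+1)$st largest value changes when a single new value $v:=x_{n+1}$ is adjoined to $x_1,\dots,x_n$. Abbreviate $a_k:=x_n^{(k)}$, the $k$th largest value among the first $n$ entries, so that $a_1\ge\cdots\ge a_n$ and $a_{r+1}$ exists because $n\ge r+1$. Merging $v$ into the sorted list $a_1\ge\cdots\ge a_n$ and reading off the value now occupying rank $r+1$, I would distinguish three positions of $v$: if $v\ge a_r$, the $r+1$ largest values of the enlarged list are $a_1,\dots,a_r$ together with $v$, whose minimum is $a_r$, so $x_{n+1}^{(r+1)}=a_r$; if $a_{r+1}\le v\le a_r$, then $v$ itself occupies rank $r+1$ and $x_{n+1}^{(r+1)}=v$; and if $v\le a_{r+1}$, the top $r+1$ values are unchanged and $x_{n+1}^{(r+1)}=a_{r+1}$. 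In each case one checks directly that $a_{r+1}\vee(a_r\wedge v)$ returns the same number, which is precisely the asserted recursion since $a_r=x_n^{(r)}$ and $a_{r+1}=x_n^{(r+1)}$. The one delicate point is the bookkeeping at the boundaries $v=a_r$ and $v=a_{r+1}$, where ties arise; but the three case-values agree on these overlaps (and $a_{r+1}\vee(a_r\wedge v)$ is continuous and nondecreasing in $v$), so the boundaries need no separate treatment and, as anticipated, the tie-breaking convention is immaterial. I expect this merge analysis — arguing that $v$ lands in the stated rank without any circular appeal to tie-breaking — to be the main, though modest, obstacle; the remainder is the routine induction assembled above.
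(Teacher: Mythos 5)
Your proposal is correct and follows essentially the same route as the paper's proof: fix $r$, verify the base case $n=r+1$ (both sides reduce to $x_1\wedge\cdots\wedge x_{r+1}$), and induct on $n$ via the one-step identity $x_{n+1}^{(r+1)}=x_n^{(r+1)}\vee\bigl(x_n^{(r)}\wedge x_{n+1}\bigr)$, proved by a three-way case analysis on where the new value $x_{n+1}$ falls relative to $x_n^{(r)}$ and $x_n^{(r+1)}$ — precisely the paper's cases (a), (b), (c). Your additional remarks (deriving the component form \eqref{e:recA} from the sequence form \eqref{e:rec}, and observing that tie-breaking is immaterial since only the multiset of values enters) are sound refinements of details the paper leaves implicit.
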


\begin{proof} Fix an integer $r$ and we prove \eqref{e:recA} by
  induction on $n$. The base of the induction is $n=r+1$ and the left
  side of \eqref{e:recA} is $x_{r+1}^{(r+1)}=\wedge_{i=1}^{r+1}
  x_i$. The right side is $x_r^{(r)}\wedge x_{r+1}=\wedge_{i=1}^{r+1}x_i$. 
  So  \eqref{e:recA} is proved for   $n=r+1$.

As an induction hypothesis, assume  \eqref{e:recA}  is true for
$n=r+p$ for $p\geq 1$ and we verify \eqref{e:recA}  is true for
$n=r+p+1$.
The left side of \eqref{e:recA}  for $n=r+p+1$ is
$x_{r+p+1}^{(r+1)}=LHS.$ The right side is
\begin{align}
RHS=& \bigvee_{j=r+1}^{r+p+1} \bigl(x_{j-1}^{(r)}\wedge x_j\bigr)
=\bigvee_{j=r+1}^{r+p} \bigl(x_{j-1}^{(r)}\wedge x_j\bigr) \bigvee 
 \bigl(x_{r+p}^{(r)}\wedge x_{r+p+1}\bigr)\nonumber\\
\intertext{and from the induction hypothesis this is equal to}
&x_{r+p}^{(r+1)} \bigvee  \bigl(x_{r+p}^{(r)}\wedge x_{r+p+1}\bigr).\label{e:refer}
\end{align}
Now consider cases:
\begin{description}
\item[Case (a) $x_{r+p+1}>x_{r+p}^{(r)}$.] For this case, increasing
  the sample size from $r+p$ to $r+p+1$ means $x_{r+p}^{(r)}$  becomes
  $x_{r+p+1}^{(r+1)}$. So $RHS= x_{r+p}^{(r+1)} \bigvee
  x_{r+p}^{(r)}=x_{r+p}^{(r)}=LHS$.
\item[Case (b) $x_{r+p}^{(r+1)} \leq x_{r+p+1} \leq x_{r+p}^{(r)}$.]
  The term in parentheses on the right side of \eqref{e:refer} is 
$$x_{r+p}^{(r)} \wedge x_{r+p+1}= x_{r+p+1} = x_{r+p+1}^{(r+1)}$$
and thus 
$$RHS=x_{r+p}^{(r+1)} \vee x_{r+p+1}^{(r+1)} = x_{r+p+1}^{(r+1)}  =LHS.$$
\item[Case (c) $x_{r+p+1}< x_{r+p}^{(r+1)}$.]  We have
$$RHS=x_{r+p}^{(r+1)} \vee \bigl(x_{r+p}^{(r)} \wedge x_{r+p+1}\bigr)
=x_{r+p}^{(r+1)} \vee x_{r+p+1}=x_{r+p}^{(r+1)} $$
and because of where the added point $x_{r+p+1}$ is located, when the
sample size increases from $r+p$ to $r+p+1$, the above
equals $x_{r+p+1}^{(r+1)} =LHS$.
\end{description}
The three cases exhaust the possibilities and this completes the
induction argument.
\end{proof}

\subsection{The IID Setting}\label{sec:iid}
Now we add the randomness. Let $\bX=(X_n)_n\in\RR^\NN$ be an iid sequence of rvs in $\RR$ with cdf $F$ and set $ \bX^{(0)}= \bX$.
Then for $r\in\NN$ the {\it $r$-th order extremal process}
 is  the augmented sequence 
 $\bX^{(r)}=(X^{(r)}_n)_{n\in\NN}$ in $\RR^\NN_{-\infty}$ constructed as in \eqref{augseq}; specifically,
\begin{equation}\label{augseqR}
 \bX^{(r)}=\bigl(\underbrace{-\infty,\dots,-\infty,}_{r-1 \text{
    entries}}M_n^{(r)},n\geq r\bigr),
\end{equation}
where the $M_n^{(r)}$ are the order statistics of $X_1,X_2,\ldots, X_n$ 
defined lexicographically as for the $m_n^{(r)}$ in \eqref{augseq}.
Note that $\bX^{(1)}=\bigvee \bX^{(0)}=\bigvee \bX$ is the sequence of partial maxima associated with $\bX$.

\sid{
To think about the Markov property for $(\bX^{(r)},r\geq 1)$,
we imagine conditioning on the monotone sequence $\bX^{(r)}
=\bx^{(r)}$. For indices where  $\bx^{(r)}$ is constant, say $x$, 
 the structure of  $\bX^{(r+1)}$ should be as if we
construct the maximum sequence from repeated observations from the
conditional distribution of $(X_1|X_1 \leq x)$. The following
construction  make this precise.}

Let $\bU=(U_{r,n})_{n,r\in\NN}$ be an iid array of uniform r.v.'s in $(0,1)$.
Assume $\bX=\bX^{(0)}$ and $\bU$ are independent random elements.
For $m\in\RR$ with $F(m)>0$
the left-continuous inverse $u\mapsto F^\leftarrow(u|m)$ of the
conditional cdf $x\mapsto F(x|m):=P(X_{\sid{1}}\le x|X_{\sid{1}}\le m)$ is well-defined;
otherwise, if $F(m)=0$ set $F^\leftarrow(u|m)=\eins_{m> 0}$ with
$F^\leftarrow(u|-\infty)\equiv 0$.

For $r\in\NN=\{1,2,\dots\}$ introduce two sequences 
$\widehat \bX_{(r+1)}=(\widehat X_{(r+1),n})_n$ and 
$\wt \bX_{(r+1)}=(\wt  X_{(r+1),n})_n$.
For the first, we have for $n=1$ that $\widehat
X_{(r+1),1}:=X_1^{(1)}=X_1$ and, for $n\ge 2$,
$$
\widehat X_{(r+1),n}:=\begin{cases}
F^\leftarrow(U_{r,n}|X_n^{(r)})\prod_{1\le k\le r}
\eins_{X^{(k)}_n=X^{(k)}_{n-1}},& \text{ if }
X_{n-1}^{(r)}=X_n^{(r)}\\
\sum_{k=1}^r X^{(k)}_n\eins_{X^{(k)}_n>X^{(k)}_{n-1}}\prod_{1\le l<k}
\eins_{X^{(l)}_n=X^{(l)}_{n-1}},& \text{ if } X_{n-1}^{(r)}<X_n^{(r)}\\
\end{cases}
$$
so if there is no jump in the $r$th order maximum process we sample
from the conditional distribution and if there is a jump, we note the
new value that caused the jump. For the second sequence we have
$\wt X_{(r+1),n}:=-\infty $ if $n\leq r$ and if $n>r$ 
$$
\wt X_{(r+1),n}:=\begin{cases}
X_{n-1}^{(r)}
,& \text{ if }X_n^{(r)}>X^{(r)}_{n-1},\\
F^\leftarrow(U_{r,n}|X^{(r)}_n)
,&\text{
    if }X^{(r)}_n=X^{(r)}_{n-1},\\
\end{cases}
$$
so if there is no jump in the $r$th order maxima at $n$, we
sample from the conditional distribution and if there is a jump at
index $n$ we note the smaller value at $n-1$ that the process jumps from.
The sequences $\wt \bX_{r+1}$ and $\widehat \bX_{(r+1)}$ depend on
$\bX,\bX^{(1)},\dots,\bX^{(r)}$ only via $\bX^{(r)}$ and
$\bX^{(1)},\bX^{(2)},\dots,\bX^{(r)}$,  respectively. 

\subsection{Identities in Law and the Markov Property}\label{sec:idL}
Next we provide some identities in law which will show that the sequence of extremal processes is a sequence-valued Markov chain.

\begin{theorem}\label{thmswitchr}
For $r\in\NN$ the following random variables are equal in distribution
as random elements in $(\RR_{-\infty}^\NN)^{(r+1)}$ \sid{(and hence in 
$(\RR_{-\infty}^\NN)^{\NN}$),}
\begin{equation}\label{XisperpetMAX}(\bX^{(0)},\dots,\bX^{(r)})\;\eqd\;(\widehat
  \bX_{(r+1)},\bX^{(1)},\dots,\bX^{(r)})\,,\end{equation} 
and
\begin{equation}\label{choppedr}\Big(\bX^{(1)},\dots,\bX^{(r+1)}\Big)\;\eqd\;\Big(\bX^{(1)},\dots,\bX^{(r)},\bigvee \wt \bX_{(r+1)}\Big)\,.\end{equation}
In particular,~$\bX^{(1)},\bX^{(2)}\dots$ is a Markov chain with state space $\RR^{\NN,\uparrow}_{-\infty}$, with its conditional distributions satisfying
\begin{equation}\label{MARKOVr}\Big(\bX^{(r+1)}\Big|\bX^{(r)},\dots,\bX^{(1)}\Big)\;\eqd\;\Big(\bigvee \wt \bX_{(r+1)}\Big|\bX^{(r)}\Big)\,,\quad r\in\NN\,.\end{equation}

\end{theorem}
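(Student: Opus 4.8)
The plan is to establish \eqref{XisperpetMAX} first and then read off \eqref{choppedr} and \eqref{MARKOVr} from it. All three identities are between random elements of finite products of $\RR_{-\infty}^\NN$ under the product topology, so it suffices to verify them on cylinder sets, i.e.\ after projecting onto the first $N$ coordinates for each fixed $N$; this turns every conditional-independence assertion into one about finitely many coordinates. For \eqref{XisperpetMAX} the last $r$ entries $(\bX^{(1)},\dots,\bX^{(r)})$ are literally the same on both sides, and $\widehat\bX_{(r+1)}$ is a deterministic function of them and of the independent array $\bU$; hence it is enough to show that the regular conditional distribution of $\bX^{(0)}=\bX$ given $\mathcal G:=\sigma(\bX^{(1)},\dots,\bX^{(r)})$ coincides with that of $\widehat\bX_{(r+1)}$ given $\mathcal G$.

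To compare these two conditional laws I would proceed coordinatewise, calling $n\ge2$ a \emph{jump index} when $X_{n-1}^{(r)}<X_n^{(r)}$ and a \emph{no-jump index} otherwise. Two deterministic facts organize the argument. At $n=1$ and at each jump index the arriving value is recoverable from the order statistics: if $k^\ast$ is the least $k$ with $X_{n-1}^{(k)}<X_n^{(k)}$, then $X_n$ has rank $k^\ast$ and $X_n=X_n^{(k^\ast)}$, which is exactly the value produced by the jump branch of $\widehat X_{(r+1),n}$, so these coordinates are $\mathcal G$-measurable and agree. The second fact is that a value entering at a no-jump index never re-surfaces into the top $r$: the rank of $X_n$ among $X_1,\dots,X_m$ is nondecreasing in $m$ and $X_m^{(r)}$ is nondecreasing in $m$, so a value at or below the current $r$th largest stays there and never influences $\bX^{(1)},\dots,\bX^{(r)}$ at later times. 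Thus $\mathcal G$ is generated by the jump indices, their ranks and the jump values, and what remains is to show that, conditionally on $\mathcal G$, the no-jump values are independent with $X_n\sim F(\cdot\mid X_n^{(r)})$. Revealing $X_1,X_2,\dots$ against $\mathcal F_{n-1}=\sigma(X_1,\dots,X_{n-1})$, the event that $n$ is a no-jump index is $\{X_n\le X_{n-1}^{(r)}\}$, on which the conditional law of $X_n$ is $F(\cdot\mid X_{n-1}^{(r)})=F(\cdot\mid X_n^{(r)})$; the monotonicity fact upgrades conditioning on $\mathcal F_{n-1}$ to conditioning on all of $\mathcal G$. Since $F^\leftarrow(U_{r,n}\mid m)$ has law $F(\cdot\mid m)$ for uniform $U_{r,n}$, and the $U_{r,n}$ are independent of $\mathcal G$ and of one another, the two conditional laws match coordinate by coordinate, giving \eqref{XisperpetMAX}.

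From \eqref{XisperpetMAX} I would obtain \eqref{choppedr} by pushing the identity through a fixed measurable map. First I would check the pathwise identity
\[
\wt\bX_{(r+1)}=\bX^{(r)}{}_{\mathcal{R}}\!\wedge\widehat\bX_{(r+1)}
\]
by a finite case analysis like that in Proposition \ref{prop:rec}: at a jump index $X_{n-1}^{(r)}\wedge\widehat X_{(r+1),n}=X_{n-1}^{(r)}$ because $\widehat X_{(r+1),n}=X_n\ge X_n^{(r)}>X_{n-1}^{(r)}$, which equals $\wt X_{(r+1),n}$; at a no-jump index $\widehat X_{(r+1),n}=F^\leftarrow(U_{r,n}\mid X_n^{(r)})\le X_n^{(r)}=X_{n-1}^{(r)}$, so the minimum equals $F^\leftarrow(U_{r,n}\mid X_n^{(r)})=\wt X_{(r+1),n}$ (here one uses that no increase of the $r$th largest forces none of the top $r$ to increase, so the indicator in the no-jump branch equals $1$). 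Applying the measurable map $\Psi(\bx,\by^{(1)},\dots,\by^{(r)}):=\bigl(\by^{(1)},\dots,\by^{(r)},\bigvee(\by^{(r)}{}_{\mathcal{R}}\!\wedge\bx)\bigr)$ to both sides of \eqref{XisperpetMAX}, Proposition \ref{prop:rec} turns the left side into $(\bX^{(1)},\dots,\bX^{(r)},\bX^{(r+1)})$ and the pathwise identity turns the right side into $(\bX^{(1)},\dots,\bX^{(r)},\bigvee\wt\bX_{(r+1)})$; equality in law is preserved, which is \eqref{choppedr}.

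Equation \eqref{MARKOVr} is then immediate: equality of the joint laws in \eqref{choppedr} forces equality of the conditional law of the last coordinate given the first $r$, and since $\bigvee\wt\bX_{(r+1)}$ depends on $(\bX^{(1)},\dots,\bX^{(r)})$ only through $\bX^{(r)}$ (and the independent $\bU$), that conditional law collapses to one given $\bX^{(r)}$ alone, exhibiting $(\bX^{(r)})_{r\ge1}$ as a Markov chain. The crux, and the step I expect to be hardest, is the conditional-independence claim for the no-jump values given the \emph{entire} order-statistic data: under continuity of $F$ one could invoke the classical independence of the sequential ranks from the order statistics, but since no continuity is assumed, the lexicographic tie-breaking and the conditional-quantile construction $F^\leftarrow(\cdot\mid m)$ must be tracked carefully through the sequential argument, and it is the finite-$N$ reduction together with the monotonicity fact that keeps this under control.
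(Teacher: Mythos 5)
Your derivation of \eqref{choppedr} and \eqref{MARKOVr} from \eqref{XisperpetMAX} is exactly the paper's (Proposition \ref{prop:rec}, the pathwise identity $\wt\bX_{(r+1)}=\bX^{(r)}{}_{\mathcal{R}}\!\wedge\widehat\bX_{(r+1)}$, and a fixed measurable map), but your proof of \eqref{XisperpetMAX} itself takes a genuinely different route. The paper never conditions on all of $\mathcal{G}=\sigma(\bX^{(1)},\dots,\bX^{(r)})$: it encodes both sides componentwise as $\RR\times\RR_{-\infty}^{r,\downarrow}$-valued Markov chains in the time index $n$, driven by the iid innovations $(X_n,U_{r,n})$ through the maps $\widetilde\bmu_r,\widehat\bmu_r$ of \eqref{e:1}--\eqref{e:2}, observes that the two chains have a common initial state, and matches the one-step transition kernels by the explicit $A+B$ computation. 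You instead identify the conditional law of $\bX^{(0)}$ given the whole of $\mathcal{G}$ via a jump/no-jump decomposition; this is more probabilistically transparent, but it front-loads a global conditional-independence lemma that the paper's one-step computation never has to state.

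The decisive gap is ties. You split indices according to whether $X^{(r)}$ jumps, and both of your organizing ``deterministic facts'' are false when $F$ has atoms, because a new observation can then enter the top $r$ without changing the value of the $r$th largest. Concretely, take $r=2$, let $F$ put mass $1/3$ on each of $0,5,7$, and consider the positive-probability path $X_1=X_2=5$, $X_3=7$. Then $X_3^{(2)}=X_2^{(2)}=5$, so $n=3$ is a no-jump index in your sense, yet $X_3=7>5=X_2^{(2)}$: being a no-jump index is not the event $\{X_n\le X_{n-1}^{(r)}\}$, the arriving value is the new maximum (so it very much resurfaces into the top $r$), and on this event the conditional law of $X_3$ given $\mathcal{G}$ is the point mass at $7$ (read off from the jump of $\bX^{(1)}$), not $F(\cdot\mid X_3^{(2)})$. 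The correct dichotomy is $\{X_n> X_{n-1}^{(r)}\}$ versus $\{X_n\le X_{n-1}^{(r)}\}$ --- whether the top-$r$ multiset changes --- and it coincides with yours only almost surely when $F$ is continuous. Nor can this be repaired by ``tracking ties carefully,'' because the statement itself fails for atomic $F$: on the path above the third entry of $\bX^{(3)}$ equals $5$ almost surely given $\mathcal{G}$, while the third entry of $\bigvee\wt\bX_{(3)}$ is $F^\leftarrow(U_{2,3}\mid 5)$, which puts mass $1/2$ at $0$; moreover keeping the first three entries of $\bX^{(2)}$ equal to $(-\infty,5,5)$ but changing those of $\bX^{(1)}$ from $(5,5,7)$ to $(7,7,7)$ changes the conditional law of $\bX^{(3)}$, so \eqref{choppedr} and \eqref{MARKOVr} both fail. (The paper shares this blind spot: on tied paths its displayed definition of $\widehat\bX_{(r+1)}$ disagrees with the recursion \eqref{e:2}/\eqref{e:hatmu0} that its proof actually uses.) So your argument, like the paper's, must be run under a standing continuity assumption on $F$, under which your dichotomy and both facts become almost surely correct.

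Even granting continuity, the crux of your route --- that conditionally on all of $\mathcal{G}$, and not merely on $\mathcal{F}_{n-1}$, the no-jump values are independent with laws $F(\cdot\mid X_n^{(r)})$ --- is asserted rather than proved: ``the monotonicity fact upgrades the conditioning'' is where the entire difficulty lives, since $\mathcal{G}$ contains future information. It can be established along the lines you sketch: fix $N$, partition by the jump pattern on $\{1,\dots,N\}$, and peel off test functions by backwards induction using $\E\bigl[f(X_n)\eins_{X_n\le X_{n-1}^{(r)}}\mid\mathcal{F}_{n-1}\bigr]=F\bigl(X_{n-1}^{(r)}\bigr)\int f\,\rmd F\bigl(\cdot\mid X_{n-1}^{(r)}\bigr)$ together with the observation that, on the no-jump event at $n$, no other $\mathcal{G}$-measurable quantity involves $X_n$. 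But that induction is precisely the content that the paper's transition-kernel identity delivers in one-step form; as written, your proposal names the hard step and defers it rather than completing it.
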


\begin{proof} Indeed,~\eqref{choppedr} follows from~\eqref{XisperpetMAX} because
\begin{align*}
\big(\bX^{(1)},\dots,\bX^{(r+1)}\big)=&\big(\bX^{(1)},\dots,\bigvee
                                        (\bX^{(r)}{}_{\mathcal{R}}\!\wedge
                                        \bX^{(0)})\big)&&(\text{Proposition
  \ref{prop:rec}}),\\ 
\eqd&\big(\bX^{(1)},\dots,\bX^{(r)},\bigvee (\bX^{(r)}{}_{\mathcal{R}}\!\wedge \widehat \bX_{(r\!+\!1)})\big)&&\text{(from }\eqref{XisperpetMAX})\\ 
=&\big(\bX^{(1)},\dots,\bX^{(r)},\bigvee  \wt
   \bX_{(r+1)}\big)&&\text{(definitions)}\,.
\end{align*}
In~\eqref{choppedr} $\wt \bX_{r+1}$ depends on $\bX^{(1)},\dots,\bX^{(r)}$ only through $\bX^{(r)}$, and this holds for all $r\in\NN$. In particular,~\eqref{MARKOVr} holds, and $\bX^{(1)},\bX^{(2)},\dots$ must be a Markov chain.

It remains to show~\eqref{XisperpetMAX}. For $r\in\NN$ let
$\RR_{-\infty}^{r,\downarrow}:=\{\bm=(m_1,\dots,m_r)\in\RR_{-\infty}^r:m_1\ge{\dots}\ge
m_r\}$ be the space of $r$-tuples with nonincreasing $\RR_{-\infty}$-valued
components, and introduce a smooth truncation mapping
$\bmu_r=(\mu_{r,1},\dots,\mu_{r,r}):\RR_{-\infty}^{r,\downarrow}\times
\RR\mapsto \RR_{-\infty}^{r,\downarrow}$, by setting $\mu_{r,1}(m,x):=
x\vee m_1$, and, for $2\le k\le r$,
\[\mu_{r,k}(\bm,x)=m_{k-1}\eins_{x>m_{k\!-\!1}}
+m_{k}\eins_{x\le m_{k}}+ x\eins_{m_{k}<x\le m_{k-1}}\,,\]
when $x\in\RR$ and $\bm=(m_1,\dots,m_r)\in\RR_{-\infty}^{r,\downarrow}$.
Note that
\begin{equation}\label{e:mu>}
\mu_{r,k}(\sid{\bm,x)}\ge m_k \text{ for }
\bm\in\RR_{-\infty}^{r,\downarrow}, \,x\in\RR,\,1\le k\le r.
\end{equation}
 Also,
define mappings 
$\widetilde
\bmu_r=
(\widetilde\mu_{r,0},\dots,\widetilde\mu_{r,r}):\RR_{-\infty}^{r,\downarrow}
\times~\RR\sid{\mapsto} \RR\times\RR_{-\infty}^{r,\downarrow}$ and
$\widehat\bmu_r=(\widehat\mu_{r,0},\dots,\widehat\mu_{r,r}):\RR_{-\infty}^{r,\downarrow}\times
\RR\times(0,1)\mapsto \RR\times\RR_{-\infty}^{r,\downarrow}$, by
setting 
$$\widetilde
\mu_{r,k}(\bm,x):=\widehat\mu_{r,k}(\bm,x,u):=\mu_{r,k}(\bm,x),\quad
1\le k\le r, $$ 
and with $k=0$, $\widetilde\mu_{r,0}(\bm,x):=x$ and 
\begin{align}
\widehat\mu_{r,0}&(\bm,x,u)=
F^\leftarrow(u|\mu_{r,r}(\bm,x))
\prod_{1\le k\le r} \eins_{\mu_{r,k}(\bm,x)=m_k}
\nonumber 
\\
&{} +\sum_{k=1}^r
  \mu_{r,k}(\bm,x)\eins_{\mu_{r,k}(\bm,x)>m_k}\prod_{1\le l<k}
  \eins_{\mu_{r,l}(\bm,x)=m_l}\nonumber \\ 
=&F^\leftarrow(u|m_r)
\prod_{1\le k\le r} \eins_{\mu_{r,k}(\bm,x)=m_k} +x
   \sum_{k=1}^r\eins_{\mu_{r,k}(\bm,x)>m_k}\prod_{1\le l<k}
   \eins_{\mu_{r,l}(\bm,x)=m_l}\nonumber \\
=&\begin{cases}
F^\leftarrow(u|m_r),& \text{ if }x\leq m_r,\\
x,& \text{ if }x>m_{r}.\end{cases}
\label{e:hatmu0}\end{align}
for $\bm=(m_1,\dots,m_r)\in\RR_{-\infty}^{r,\downarrow}$, $x\in\RR$ and $u\in(0,1)$.

One can check that the component form of the left and right sides of \eqref{XisperpetMAX} is for $n\geq 2$,
\begin{align}
\Bigl((X_n,X_{n}^{(1)},\dots,X^{(r)}_{n}), n\geq 2\Bigr)=&
\Bigl(
\widetilde\bmu_r(X_{n-1}^{(1)},\dots,X^{(r)}_{n-1},X_n), n\geq 2
\Bigr) \label{e:1}\\
\Bigl(( \hat X_{(r+1),n},X_{n}^{(1)},\dots,X^{(r)}_{n}),n\geq 2\Bigr)=&
\Bigl(
\widehat\bmu_r(X_{n-1}^{(1)},\dots,X^{(r)}_{n-1},X_n,U_{r,n}), n \geq 2\Bigr),\label{e:2}
\end{align}
where $X_n \independent (X_{n-1}^{(1)},\dots,X^{(r)}_{n-1})$ and
$U_{r,n}\independent (X_{n-1}^{(1)},\dots,X^{(r)}_{n-1},X_n,)$ since
we assumed that $\bX$ and $\bU$ are independent arrays of iid
rv's. The right sides of \eqref{e:1} and \eqref{e:2} are 
Markov chains with stationary transition probabilities in the index
$n$ (new value is a function of the 
previous value and an independent quantity) and for $n=1$, the left
sides of \eqref{e:1} and \eqref{e:2} have common initial value 
$(X_1,X_1,-\infty,\dots,-\infty) \in \mathbb{R}\times
\mathbb{R}_{-\infty}^{r,\downarrow}$. Therefore, to prove equality in 
distribution in \eqref{XisperpetMAX},
it suffices to prove both chains have a common transition kernel.

To see this, 
let $X'\eqd X_1 \sim F$ and $U'\eqd U_{1,1}\in(0,1)$ be independent
rv's. For $x,y\in\RR$ with $F(y)>0$ note 
\begin{align}
P(X'\!\le\! y,F^\leftarrow(U'|y)\le x)=&P(X'\!\le\! y)P(X'\!\le\!
                                         x|X'\!\le\! y)\nonumber \\
=&F(y)F(x|y)=F(x\wedge y),\label{e:extra}
\end{align}
Consequently, for
$\bm=(m_1\!,\!\dots\!,\!m_r),\,\bm'=(m'_1\!,\!\dots\!,\!m'_r)\in\RR_{-\infty}^{r,\downarrow}$
with $F(m'_k)\!>\!0$ for $1\!\le\! k\!\le\! r$, setting
$m'_{0}:=\infty$, we have for the transition probability,
\begin{align*}
P\Bigl(
\bigl( &\widehat X_{(r+1),n+1}, X_{n+1}^{(1)},\dots ,X_{n+1}^{(r)} \bigr)
\in (-\infty,x]\times\prod_{k=1}^r [-\infty, m_k]\\
&\qquad \qquad \Big|
\widehat X_{(r+1),n}=y, (X_{n}^{(1)},\dots ,X_{n}^{(r)}) =\bm'\Bigr)\\
&=P\Big(\widehat\bmu_r(\bm',X',U')\in (-\infty,x]\times\prod_{k=1}^r
  [-\infty, m_k]\Big)\\ 
&=P\Big(\widehat\bmu_r(\bm',X',U')\in (-\infty,x]\times\prod_{k=1}^r
  [-\infty, m_k], X'\leq  m_r'\Big)\\ 
&\qquad +\sum_{k=1}^r P\Big(\widehat\bmu_r(\bm',X',U')\in
  (-\infty,x]\times\prod_{k=1}^r 
  [-\infty, m_k], X' \in (m'_k, m_{k-1}']\Bigr)\\&=A+B.
\end{align*}
Consider 
$$A=P\Bigl( F^\leftarrow (U'|m_r')\leq x, \mu_{rk}(\bm',X',U')\leq
m_k, k=1,\dots,r; X'\leq m_r'\Bigr).$$
 If $m_k'>m_k$ for some $k=1,\dots,r$, then because of \eqref{e:mu>},
 the probability A is 0. So assume for $k=1,\dots,r,$ that $m_k'\leq
 m_k$. Then the condition $X'\leq m_r'$ in $A$ implies $X'\leq
 m_k'\leq m_k$ for $k=1,\dots,r$ and using \eqref{e:extra}, $A$ reduces to
$$A=P(F^\leftarrow (U'|m_r')\leq x, X'\leq m_r') =F(x\wedge m_r')\prod_{1\le k\le r} \eins_{m_k'\le m_k} .
$$
For $B$ we use \eqref{e:hatmu0} and get
\begin{align*}
B&=\sum_{k=1}^r P(X'\in (m_k',m_{k-1}'], X'\leq x,
\underbrace{\mu_{rl}(\bm',X')}_{\geq m_l'}\leq m_l;l=1,\dots,r)
\end{align*}
Fix $k$ and suppose $l>k$. Then the interval $(m_l',m_{l-1}']$ is to
the left of $(m_k',m_{k-1}']$  where $X'$ is located and
$\mu_{rl}(\bm',X')=m_{l-1}'$. The probability is then $0$ unless
  $m_l\geq m_{l-1}'$. If $l<k$, the order of the intervals is reversed,  $\mu_{rl}(\bm',X')=m_l'$, and the probability is $0$ unless
  $m_l'\leq m_l$. Thus, $B$ becomes
\begin{align*}
B&=\sum_{k=1}^r P(m_k'<X'\le x\wedge m_k\wedge m_{k-1}') \prod_{1\le
   l<k}\eins_{m_l'\le m_l}\;\prod_{k<l\le r}\eins_{m_{l-1}'\le m_l}. 
\end{align*}

On the other hand, from the left sides of \eqref{XisperpetMAX} and
\eqref{e:1},
\begin{align*}
P\Bigl ( \bigl(X_{n+1},&X_{n+1}^{(1)},\dots,X_{n+1}^{(r)}\bigr) \in 
(-\infty,x]\times\prod_{k=1}^r [-\infty, m_k]\\
&\qquad \qquad\Big|X_n=y,
(X_{n},X_{n}^{(1)},\dots,X_{n}^{(r)})=\bm')\\
&=P(\widetilde{\bmu}_r (\bm',X') \in (-\infty,x]\times\prod_{k=1}^r
  [-\infty, m_k])\\
&=P\Bigl( \bigl( X',\mu_{rl}(\bm',X'), l=1,\dots,r\bigr) \in (-\infty,x]\times\prod_{k=1}^r
  [-\infty, m_k] \Bigr)\\
&=P\Bigl( X' \leq x, X'\leq m_r',\mu_{rl}(\bm',X') \leq m_l,
  l=1,\dots,r)\\
&\qquad +\sum_{k=1}^r P( X'\leq x, X'\in (m_k,m_{k-1}],\mu_{rl}(\bm',X') \leq m_l,
  l=1,\dots,r)\\
&=A+B.
\end{align*}
This completes the proof of~\eqref{XisperpetMAX} and of Theorem \ref{thmswitchr}.
\end{proof}

\section{Asymptotic Behaviour of the Discrete Time Process $\bMr$ for large $r$}\label{sec:asymp}
In this section we consider asymptotic behaviour as 
 $r\to\infty$  of $\{\bMr=(\Mr_n, n \geq r), r\geq 1\}$ as an $\R^\infty$-valued stochastic process.   
 As $r$ increases we are pushing into values far from the largest, so
 limit behaviour for both
the range of $\bMr$ and $\bMr$ itself, depend critically on left tail
behavior of the distribution of $X_1$.
Appropriate left-tail conditions related to  domain
 of attraction conditions in classical extreme value theory 
 make the range and the sequence
 of $r$th order maxima converge weakly.



Throughout this section we will assume  $F$ is
continuous, so the records of $\{X_n\}$ are Poisson with mean measure
$R$ \citep[page 166]{resnickbook:2008} which we denote PRM($R$). 
The assumption of continuity could be relaxed as in
\cite{engelen:tommassen:vervaat:1988,shorrock:1974,shorrock:1975} but
results are most striking when $F$ is continuous and we proceed in
this setting.
\subsection{$r$th maximum and $r$-records}
Let $\{X_n, n\ge 1\} $ be iid random variables with common distribution
function $F(x)$ and 
 set $R(x)=-\log (1-F(x))=-\log \bar F(x).$ 
Define
\begin{align*}
R_n=&\sum_{j=1}^n 1_{[X_j \geq X_n]}
=\text{relative rank of $X_n$ among $X_1,\dots,X_n$}\\
=&\text{rank of $X_n$ at ``birth''}.
\end{align*}
It is known \citep{renyi:1962} that $\{R_n\}$ are independent random variables and $R_n$
is uniformly distributed on $\{1,\dots,n\}$; that is,
$$P[R_n=i]= 1/n,\quad i=1,\dots,n.$$

Considering $\{\bMr, r\geq 1\}$ as an $\R^\infty$-valued stochastic
process, we ask what is the asymptotic behavior  of $\bMr$ and its range as a function of $r$  as $r\to\infty$?

Define the $r$-record times of $\{X_n\}$ by
$$\Lr_0 =0,\quad \Lr_{n+1}=\inf\{j>\Lr_n: R_j=r\}.
$$
The $r$-records are then $\{X_{\Lr_n}, n\geq 1\}$, which for each $r$,
are points of
PRM($R(dx)$) by Ignatov's theorem.


We list some initial facts about $\bMr$ and its range.
\begin{itemize}
\item For fixed $r$,  $\bMr=\{M_n^{(r)},n\geq r\}$ jumps at index $k
  \geq r$ iff 
$$R_k \in \{1,\dots,r\},$$
so 
$$\{[\bMr \text{ jumps at index }k],k\geq r\}$$
are independent events over $k$ and
$$P[\bMr \text{ jumps at }k] =\frac rk.$$

\begin{Remark}
{\rm This has the implication that if we re-index and set $k=r+l$ for
$l\geq 0,$ then for any fixed $l$,
$$P[\bMr \text{ jumps at }r+l] =\frac{ r}{r+l} \to 1,\quad
(r\to\infty).$$
So for large $r$, $\bMr$ jumps at almost every integer. Define the
jump indices
$$\{\tau_l^{(r)},l\geq 0\} =\{j\geq 1: M_{r+j}^{(r)}>M^{(r)}_{r+j-1}\} \cup
\{0\}.$$
Then in $\mathbb{R}_+^\infty$,
$$ \{\tau_l^{(r)},l\geq 0\} \Rightarrow \{0,1,2,\dots \}.$$
}
\end{Remark}

\item For fixed $r$, let $\mathcal{R}_r$ be the range of $\bMr$;
that is, the distinct points without repetition hit by $\{M_n^{(r)},
n\geq r\}$. Then,
\beqq\label{e:rangeM}
\mathcal{R}_r:=\bigcup_{p=1}^r \{X_{L_n^{(p)}}, n\geq 1\},\eeqq
By Ignatov's theorem \citep{ignatov:1977,
  stam:1985,goldie:rogers:1984,engelen:tommassen:vervaat:1988,resnickbook:2008},
this is a sum of $r$ independent PRM(R) 
processes and therefore the range of $\bMr$ is PRM($rR$).

To prove \eqref{e:rangeM}, suppose $\Mr_n=x$, for some $n\geq r.$ Suppose the $r$th
largest of $X_1,\dots,X_n$ occurs at $X_i=x$ for $i \leq n$. If the
rank of $X_i$ were $>r$, it could not be the case that $\Mr_n=x$. This
shows that
$$ \text{range of $\bMr$} \subset 
\bigcup_{p=1}^r \{X_{L_n^{(p)}}, n\geq 1\}.$$
Conversely, suppose $X_{L_n^{(p)}} =x$, so at time $L_n^{(p)}$, the rank
of $X_{L_n^{(p)}}$ is $p$. Wait until $r-p$ additional $X$'s have been
observed that exceed $x$ and then the $r$th largest will equal $x$.
\end{itemize}

\subsection{Limits for the range $\mathcal{R}_r$ of
  $\bMr$}\label{sec:limRange} 
Although our primary interest is in the behavior of $\{\bMr, r\geq
1\}$ as an $\R^\infty$-valued random sequence, it is instructive and
helpful to discuss the behavior of the range $\mathcal{R}_r$ of $\bMr$.

As a basic result we derive a deterministic limit for $\mathcal{R}_r$. 
Let $\mathcal{R}$ be the support of the measure
  $R(\cdot)$ which corresponds to the monotone function
  $R(x)=-\log(1-F(x))$. 

\bpr \label{prop:detLim}
As $r\to\infty$,
$\mathcal{R}_r$, the range of $\bMr$, converges as a random
  closed set in the Fell topology
  \citep{molchanov:2005,matheron:1975,vervaat:holwerda:1997} to the
  non-random limit $\mathcal{R}$: 
\begin{equation}\label{e:suppConv}
\mathcal{R}_r \Rightarrow \mathcal{R}.\end{equation}
\epr

\begin{proof}
Since $\mathcal{R}_r \subset \mathcal{R}$, it
suffices to show for any open $G$ with $\mathcal{R}\cap G \neq
\emptyset$, that
$$P[\mathcal{R}_r \cap G \neq \emptyset] \to 1.$$
However, $\mathcal{R}\cap G \neq
\emptyset$ implies $R(G)>0$ and therefore,
\begin{align*}
P[\mathcal{R}_r \cap G \neq \emptyset] =&
1-P[ \text{PRM}(rR(G))=0]\\
=&1-e^{-rR(G)} \to 1,\quad (r\to\infty)\end{align*}
since $R(G)>0$.
\end{proof}

The set convergence in \eqref{e:suppConv} is to a deterministic
limit. Since $\mathcal{R}_r$ is a PRM($rR$) point process, we can get
a random limit if we center and scale the $\{X_n\} $
so that the
mean measure $rR$ converges to a Radon measure.  Recall $R(x)=-\log \bar
F(x)$.

Assume there exist $a_r>0$ and $b_r \in \mathbb{R}$ and a
non-decreasing limit
function $g(x)$ with more than one point of increase  such that
\begin{equation}\label{e:assume}
rR(a_rx-b_r) \to g(x),\qquad (r\to\infty).
\end{equation}
For $x$ such that $g(x)>0$, to counteract $r\to\infty$, we must have
$R(a_rx-b_r)\to 0$ and 
 $a_rx-b_r$ converging to the
left endpoint of $F$ (and $R$).

We now explain why $e^{-g}$ is related to an extreme value
distribution.
Remembering that $e^{-R}=\bar F$, equation \eqref{e:assume} is equivalent to 
$$(\bar F (a_r x -b_r))^r =\exp\{-rR(a_rx-b_r)\} \to e^{-g(x)}$$
or 
\begin{equation}\label{e:min}
P\Bigl[ \frac{\wedge_{i=1}^r X_i +b_r}{a_r} >x \Bigr] \to
e^{-g(x)}.\end{equation}
So we recognize $e^{-g}$ as the survivor function of an extreme value
distribution of minima of iid random variables. Expressing this in
terms of maxima by setting $Y_i=-X_i$ we get
\eqref{e:min} equivalent to 
\begin{equation}\label{e:max}
P\Bigl[ \frac{\vee_{i=1}^r Y_i -b_r}{a_r} \leq -x \Bigr] \to
e^{-g(x)}   =G_\gamma (-x),\end{equation}
for some $\gamma \in \mathbb{R}$, where $G_\gamma (x)=\exp\{-(1+\gamma
x)^{-1/\gamma}\},\,1+\gamma x>0$ is the shape
parameter family of extreme value 
distributions for maxima \citep{resnickbook:2008,
  dehaan:ferreira:2006}.
So in \eqref{e:assume}, $g(x)=g_\gamma(x)=-\log G_\gamma (-x).$ The
usual way to write \eqref{e:max} is
$$rP[Y_1>a_r(-x)+b_r]\to g(x), \qquad \forall x
\;\text{s.t. }g(x)>0,$$ and \eqref{e:assume} is the same as 
\begin{equation}\label{e:assumeF}
rF(a_rx -b_r)\to g(x), \qquad \forall x
\;\text{s.t. }g(x)>0.
\end{equation}

In particular, apart from centering, we have the cases:
\begin{enumerate}
\item {\it Gumbel case}: $\gamma=0$. Then 
$$g_0(x)=e^x,\quad x\in \mathbb{R}.$$

\item {\it Reverse Weibull case}: $\gamma<0$: Then $1+\gamma (-x)>0$  iff $x>-1/|\gamma|$ and 
$$g_\gamma(x)=(1+|\gamma |x)^{1/|\gamma|},\quad x>-1/|\gamma |.$$
Adjusting the centering and scaling by taking $b_r=0$, we find $R$ is regularly
varying at $0$ and 
$$rR(a_r x)\to x^{1/|\gamma |}, \quad x>0.$$

\item {\it Frech\'et case}: $\gamma>0$.  Then $1+\gamma (-x)>0$  iff $x<1/\gamma$ and 
$$g_\gamma(x)=(1-\gamma x)^{-1/\gamma},\quad x<1/\gamma.$$
Adjusting the centering and scaling so the support is $(-\infty,0)$ we
get 
$$rR(a_rx)\to |x|^{-1/\gamma},\quad x<0,$$
which is regular variation at 0 from the left.
\end{enumerate}

We can apply this analysis to get convergence of $\mathcal{R}_r$ after centering and scaling. 
Recall $\mathcal{R}_r$ is PRM(rR). A family of Poisson point measures
converges weakly iff the mean measures converge
(eg. \cite{resnickbook:2007}). So replacing 
$$X_i\mapsto \frac{ X_i +b_r}{a_r}$$
rescales the points of the range to be Poisson with mean measure given
by the left side of  \eqref{e:assume}.  Let 
\begin{equation}\label{e:defsuppgamma}
\text{supp}_\gamma=\{x:1-\gamma x>0\}
\end{equation}
 and 
$m_\gamma (\cdot) $ be the measure with density
$g'_\gamma (x),\, x\in \text{supp}_\gamma.$
 Let  $M_+(\text{supp}_\gamma)$ be
the space of Radon measures on $\text{supp}_\gamma$, topologized by vague
convergence.
Then \eqref{e:assume} implies the vague convergence
$$rR\bigl(a_r (\cdot) -b_r \bigr)\stackrel{v}{\to} m_\gamma (\cdot) $$
in $M_+(\text{supp}_\gamma)$, and thus on  $M_+(\text{supp}_\gamma)$ we have
\begin{equation}\label{e:rangeGo}
(\mathcal{R}_r +b_r)/a_r \Rightarrow PRM(m_\gamma).
\end{equation}

We may realize $\text{PRM}(m_\gamma )$ as follows: 
Let  $\Gamma_i=\sum_{j=1}^i E_j$ be a sum of iid standard exponential
random variables. The $\{\Gamma_i\}$ are points of a homogeneous
Poisson process rate $1$ on $[0,\infty)$. The measure $m_\gamma$ has
distribution 
$$g_\gamma: \text{supp}_\gamma \mapsto (0,\infty),$$ with inverse
$$g^\leftarrow_\gamma:  (0,\infty) \mapsto \text{supp}_\gamma .$$ 
The transformation theory for Poisson processes (eg. \cite[Section
5.1]{resnickbook:2007}) means
$\sum_{i=1}^\infty \epsilon_{g_\gamma^\leftarrow (\Gamma_i)} 
$ is PRM($m_\gamma$) on $\text{supp}_\gamma$. For instance, if
$\gamma=0$,
$\text{supp}_0=\mathbb{R}$, $g_0(x)=e^x,\,x\in \mathbb{R}, $ and
$g_0^\leftarrow (y)=\log y,\,y>0, $ and PRM$(m_0)=\sum_i
\epsilon_{\log \Gamma_i}.$ 

\subsection{Weak convergence of the $r$th maxima sequence $\bMr$}
\label{sec:rMax}
Having understood how to get the range $\mathcal{R}_r$  of $\bMr$ to
converge, we turn to convergence of $\bMr$ itself.
We continue to suppose the minimum domain of attraction condition, so that
 $R$ satisfies \eqref{e:assume}, and recall  $M_+(\text{supp}_\gamma)$ is
the space of Radon measures on $\text{supp}_\gamma$, topologized by vague
convergence. 
We start with a preliminary result on the empirical measures
generated by $\{X_i\}$ that will be needed to study the weak convergence of $\{\bMr\}$.

\begin{Proposition}\label{prop:ptproc}
Assume \eqref{e:assume}. If
$N$ is a random element  of $M_+(\text{supp}_\gamma)$ which is PRM$(m_\gamma)$,
then for any $j\geq 0$,
\begin{equation}\label{e:singlej}
\sum_{i=1}^{r+j} \epsilon_{(X_i+b_r)/a_r} \Rightarrow N=\sum_{i=1}^\infty
\epsilon_{g_\gamma^\leftarrow(\Gamma_i)} =\text{PRM}(m_\gamma),
\end{equation} in $M_+(\text{supp}_\gamma)$ and, in fact, jointly for any
$k\geq 0$,
\begin{equation}\label{e:joint}
\Bigl( \sum_{i=1}^{r+j} \epsilon_{(X_i +b_r)/a_r} ; 0\leq j\leq k \Bigr)
\Rightarrow (N,\dots,N)
\end{equation}
in $M_+(\text{supp}_\gamma) \times \dots \times M_+(\text{supp}_\gamma)$.
\end{Proposition}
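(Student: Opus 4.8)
The plan is to recognize \eqref{e:singlej} as a standard empirical-measure-to-Poisson convergence and \eqref{e:joint} as a ``converging together'' upgrade of it. For the marginal statement \eqref{e:singlej}, write $T_r(x)=(x+b_r)/a_r$ and view $\sum_{i=1}^{r+j}\epsilon_{T_r(X_i)}$ as the point process of a row-wise i.i.d.\ triangular array. By the classical criterion for such arrays (e.g.\ \cite{resnickbook:2007,resnickbook:2008}), the empirical measure converges weakly in $M_+(\text{supp}_\gamma)$ to $\text{PRM}(m_\gamma)$ if and only if the intensities converge vaguely, that is,
$$(r+j)\,P\bigl[T_r(X_1)\in\cdot\,\bigr]\stv m_\gamma(\cdot)\qquad\text{in }M_+(\text{supp}_\gamma).$$
So the first step reduces \eqref{e:singlej} entirely to this vague convergence of mean measures.

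Next I would verify that intensity convergence. Since $T_r(X_1)\le x$ iff $X_1\le a_rx-b_r$, the law of $T_r(X_1)$ is governed by $F(a_rx-b_r)$; and because $R=-\log(1-F)$ while $rR(a_rx-b_r)\to g_\gamma(x)<\infty$ forces $R(a_rx-b_r)\to 0$, one has $rF\sim rR$ on the relevant range, so \eqref{e:assumeF} holds and, as already recorded in the text, $rR(a_r(\cdot)-b_r)\stv m_\gamma$ in $M_+(\text{supp}_\gamma)$. Replacing the factor $r$ by $r+j$ is harmless since $(r+j)/r\to 1$, which yields the displayed intensity convergence, hence \eqref{e:singlej}, for every fixed $j$.

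For the joint statement \eqref{e:joint} I would exploit that the $k+1$ processes differ only by finitely many points. Writing $\mu_r^{(j)}:=\sum_{i=1}^{r+j}\epsilon_{T_r(X_i)}$, we have $\mu_r^{(j)}=\mu_r^{(0)}+\sum_{i=r+1}^{r+j}\epsilon_{T_r(X_i)}$, a difference of at most $k$ point masses. For any continuous $f$ with compact support $K\subset\text{supp}_\gamma$, the probability that any of these extra points falls in $K$ is at most $k\,P[T_r(X_1)\in K]$, and since $(r+j)P[T_r(X_1)\in K]\to m_\gamma(K)<\infty$ this bound tends to $0$. Hence $\int f\,d(\mu_r^{(j)}-\mu_r^{(0)})\stp 0$ for each such $f$, and, the vague topology being metrizable by a countable determining family, $d(\mu_r^{(j)},\mu_r^{(0)})\stp 0$ for $0\le j\le k$. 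Combining the marginal limit $\mu_r^{(0)}\Rightarrow N$ with the continuous-mapping image $(\mu_r^{(0)},\dots,\mu_r^{(0)})\Rightarrow(N,\dots,N)$ and a converging-together argument then gives $(\mu_r^{(0)},\dots,\mu_r^{(k)})\Rightarrow(N,\dots,N)$.

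The genuinely delicate point is the bookkeeping around the open set $\text{supp}_\gamma$: the limit intensity $m_\gamma$ is finite on compact subsets of $\text{supp}_\gamma$ but blows up near the boundary (near $0$ in the Fr\'echet case, near $-\infty$ in the Gumbel case), so it is essential to carry out both the intensity convergence and the ``escaping points'' estimate in $M_+(\text{supp}_\gamma)$ rather than on all of $\R$. Once everything is phrased on $\text{supp}_\gamma$, where $m_\gamma$ is Radon and each added point leaves every compact set with probability tending to $1$, the remaining steps are routine applications of the standard point-process and converging-together machinery.
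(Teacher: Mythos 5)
Your proposal is correct and takes essentially the same approach as the paper: the marginal limit \eqref{e:singlej} via the classical criterion that an iid triangular array's empirical point process converges to a PRM when the mean measures converge vaguely (the paper cites Theorem 5.3 of Resnick (2007), using the same reduction of \eqref{e:assume} to \eqref{e:assumeF}), and the joint limit \eqref{e:joint} by showing the at most $k$ extra points are vaguely negligible and then converging together. The only cosmetic difference is how the negligibility is justified: you deduce $P[(X_1+b_r)/a_r\in K]\to 0$ from boundedness of the scaled intensity on compact $K$, while the paper argues that $a_rx-b_r$ tends to the left endpoint of $F$, which carries no atom under \eqref{e:assume}; both are immediate consequences of the same hypothesis.
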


\begin{proof}
We have \eqref{e:joint} following from \eqref{e:singlej} since with
respect to the vague distance $d(\cdot,\cdot)$ on 
$M_+(\text{supp}_\gamma)$ (see, eg. \cite[page 51]{resnickbook:2007})
$$d\Bigl(\sum_{i=1}^{r} \epsilon_{(X_i+b_r)/a_r} , \sum_{i=1}^{r+j}
\epsilon_{(X_i+b_r)/a_r} \Bigr) \Rightarrow 0
$$ for any $j\geq 0$. To verify this, let $f$ be positive and continuous with
compact support on $\text{supp}_\gamma$ and from equation 3.14 of 
\cite[page 51]{resnickbook:2007}, it suffices to show 
$$
 E \Bigl |\sum_{i=1}^{r} f\bigl((X_i+b_r)/a_r \bigr) -
 \sum_{i=1}^{r+j} f\bigl((X_i+b_r)/a_r\bigr) 
\Bigr |\to 0.$$
The difference is
\begin{align*}
 E  \sum_{i=r+1}^{r+j} f\bigl((X_i+b_r)/a_r \bigr) 
&=E \sum_{i=1}^j f\bigl((X_i+b_r)/a_r \bigr) \\
\intertext{and assuming the support of $f$ is a compact set $K$ in $\text{supp}_\gamma$,
  this is bounded above by}
&\sup_{x\geq 0} f(x) jP[X_1 \in a_rK-b_r] \to 0,
\end{align*} 
since for $x\in K$, $a_rx-b_r $ converges to the left endpoint of $F$
and under \eqref{e:assume}, there cannot be an atom at this left endpoint.

The result in \eqref{e:singlej} follows by a small modification of the
proof of Theorem 5.3 in \cite[page 138]{resnickbook:2007} since
\eqref{e:assume} is the same as \eqref{e:assumeF}.
\end{proof}

Now we turn to $\R^\infty$-convergence of the $r$th maximum sequence.
Continue to suppose \eqref{e:assume}.  Without
normalization, the sequence $\bMr$  
converges to a sequence all of whose entries  are the left endpoint of
$F$. In order to get $\bMr$ to converge, we must have
$M^{(r)}_r=\wedge_{i=1}^r X_i$
  converge and this helps explain why a domain of attraction condition for
  minima is relevant.  The  condition \eqref{e:assume} produces a non-trivial
limit. 

\begin{Proposition}\label{prop:MrConv}
Suppose 
the domain of attraction condition
\eqref{e:assume} holds.
Then in $\mathbb{R}^\infty $,
\begin{equation}\label{e:bigBoyConv}
\frac{\bMr +b_r}{a_r}=\Bigl(\frac{M_{r+j}^{(r)} +b_r}{a_r}, j\geq 0 \Bigr)\Rightarrow 
\Bigl(g_\gamma^\leftarrow ( \Gamma_l), l \geq 1\Bigr)\qquad (r\to\infty),
\end{equation}
where $\{\Gamma_l, l\geq 1\}$ are the points of a homogeneous Poisson
process on $\mathbb{R}_+$.
\end{Proposition}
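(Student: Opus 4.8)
The plan is to obtain the $\mathbb{R}^\infty$ limit from the joint point-process convergence already established in Proposition \ref{prop:ptproc}. Because the product topology on $\mathbb{R}^\infty$ is generated by the finite coordinate projections, weak convergence there is equivalent to convergence of all finite-dimensional distributions, so it suffices to show, for each fixed $k\geq 0$,
\[
\Bigl(\frac{M_{r+j}^{(r)}+b_r}{a_r};\,0\le j\le k\Bigr)\Rightarrow\bigl(g_\gamma^\leftarrow(\Gamma_{j+1});\,0\le j\le k\bigr)
\]
in $\mathbb{R}^{k+1}$. The link to the point measures is the elementary observation that, among $X_1,\dots,X_{r+j}$, the $r$th largest value is the $(j+1)$th smallest; since $x\mapsto(x+b_r)/a_r$ is increasing, $(M_{r+j}^{(r)}+b_r)/a_r$ is exactly the $(j+1)$th smallest atom of the point measure $\sum_{i=1}^{r+j}\epsilon_{(X_i+b_r)/a_r}$.

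First I would identify the target. Writing $N=\sum_{i\ge1}\epsilon_{g_\gamma^\leftarrow(\Gamma_i)}$ as in \eqref{e:singlej}, the monotonicity of $g_\gamma^\leftarrow$ together with $\Gamma_1<\Gamma_2<\cdots$ shows that the atoms of $N$ in increasing order are $g_\gamma^\leftarrow(\Gamma_1)<g_\gamma^\leftarrow(\Gamma_2)<\cdots$; moreover $m_\gamma$ puts finite mass on every set bounded away from the right end of $\text{supp}_\gamma$ and vanishing mass near its left end, so these atoms do not accumulate at the left and the $(j+1)$th smallest atom of $N$ equals $g_\gamma^\leftarrow(\Gamma_{j+1})$. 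I would then apply the joint convergence \eqref{e:joint}, in which the $j$th coordinate is $\sum_{i=1}^{r+j}\epsilon_{(X_i+b_r)/a_r}$ and all $k+1$ limit coordinates are the \emph{same} copy of $N$, and invoke the continuous mapping theorem with the map that extracts from the $j$th coordinate its $(j+1)$th smallest atom. Since the limiting coordinates coincide, every extracted point is read off the single realisation $(\Gamma_l)$, which produces precisely the joint limit $(g_\gamma^\leftarrow(\Gamma_{j+1}))_{0\le j\le k}$.

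The hard part will be the continuity of this extraction functional in the vague topology: vague convergence on $\text{supp}_\gamma$ does not see atoms that escape to the left endpoint $\ell_\gamma$ (a boundary point excluded from the space), and the ordered atoms are sensitive to exactly such escape. I would handle this by truncation. Given $\epsilon>0$ and $k$, pick $c$ near $\ell_\gamma$ and $d$ near the right end so that $[c,d]\subset\text{supp}_\gamma$ is compact and the first $k+1$ atoms of $N$ fall in $(c,d)$ with probability at least $1-\epsilon$. The lower cut is controlled uniformly in $r$, since
\[
P\bigl[\exists\, i\le r+k:\,(X_i+b_r)/a_r\le c\bigr]=1-\bigl(1-F(a_rc-b_r)\bigr)^{r+k}\to 1-e^{-g(c)},
\]
which agrees with $P[N((\ell_\gamma,c])\ge1]=1-e^{-g_\gamma(c)}$ and is small for $c$ close to $\ell_\gamma$; the upper cut is controlled because $g_\gamma(d)\to\infty$ guarantees at least $k+1$ atoms below $d$ with probability tending to one. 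On the relatively compact set $[c,d]$ vague convergence upgrades to weak convergence of the restricted counting measures, where the ordered-atom map is continuous at the almost surely simple configuration $N$ having no atom at $c$ or $d$; the continuous mapping theorem then applies and letting $\epsilon\downarrow0$ finishes the finite-dimensional step.

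As an independent check of the one-dimensional limits one can bypass the point processes entirely and compute, using \eqref{e:assumeF},
\[
P\Bigl[\tfrac{M_{r+j}^{(r)}+b_r}{a_r}>x\Bigr]=P\bigl[\mathrm{Bin}(r+j,F(a_rx-b_r))\le j\bigr]\to P\bigl[\mathrm{Poisson}(g(x))\le j\bigr]=P\bigl[g_\gamma^\leftarrow(\Gamma_{j+1})>x\bigr],
\]
which matches the asserted marginal and reassures that the joint argument has the correct limit.
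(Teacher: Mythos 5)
Your proposal is correct and follows essentially the same route as the paper: both reduce to finite-dimensional convergence, exploit the duality that $\bigl[(M^{(r)}_{r+j}+b_r)/a_r \le x\bigr]$ is exactly the event that $\sum_{i=1}^{r+j}\epsilon_{(X_i+b_r)/a_r}$ has more than $j$ atoms in $(-\infty,x]$ (equivalently, that its $(j+1)$th smallest atom is $\le x$), and then invoke the joint point-process convergence \eqref{e:joint} of Proposition \ref{prop:ptproc} to pass to the limit $N$ and read off $g_\gamma^\leftarrow(\Gamma_{j+1})$. The only difference is one of rigor rather than route: the paper applies \eqref{e:joint} to the half-line counting events directly, whereas your truncation near the left endpoint makes explicit the continuity/no-escape-of-mass point that the paper leaves implicit---a worthwhile refinement, since half-lines are not relatively compact in $\text{supp}_\gamma$ and bare vague convergence alone would not control atoms drifting to the left boundary.
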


\begin{proof}
Fix $j\geq 0$ and observe for $x\in \text{supp}_\gamma$,
\begin{align*}
\Bigl[ \frac{M^{(r)}_{r+j }+b_r}{a_r} >x\Bigr]=&
\Bigl[ \sum_{i=1}^{r+j} \epsilon_{(X_i+b_r)/a_r} (x,\infty)\geq r\Bigr] =
\Bigl[ \sum_{i=1}^{r+j} \epsilon_{(X_i +b_r)/a_r}( (-\infty,x]) \leq j\Bigr]\\
\intertext{and therefore}
\Bigl[ \frac{M^{(r)}_{r+j} +b_r}{a_r} \leq x\Bigr]=&\Bigl[ \sum_{i=1}^{r+j}
                                                \epsilon_{(X_i+b_r)/a_r}(
                                                (-\infty,x]) > j\Bigr].
\end{align*}
For a non-decreasing sequence $\{x_j\}$ of  real numbers in $\text{supp}_\gamma$,
\begin{align*}
P\Bigl\{ \bigcap_{j=0}^k   \Bigl[ \frac{M^{(r)}_{r+j} +b_r}{a_r} \leq &
  x_j\Bigr]\Bigr\}
=P\Bigl\{ \bigcap_{j=0}^k  \Bigl[ \sum_{i=1}^{r+j} \epsilon_{(X_i+b_r)/a_r}(
 [0,x_j]) > j\Bigr] \Bigr\}\\
\intertext{and applying \eqref{e:joint} yields}
\to & P\{ \bigcap_{j=0}^k  [ N((-\infty,x_j]) >j]\}
=P[\sum_{i=1}^\infty \epsilon_{g_\gamma^\leftarrow (\Gamma_i)} (-\infty,x_j]>j;
      \,j=0,\dots,k]\\
=&P[g_\gamma^\leftarrow (\Gamma_{j+1} )\leq x_j; j=0,\dots,k].
\end{align*}
This yields the announced result \eqref{e:bigBoyConv}.
\end{proof}

\section{Continuous time $r$th-order extremal   processes}\label{sec:cont_time}
  In this section we make the transition to continuous
time problems. The treatment is parallel to what we gave for discretely indexed
processes but here the processes are generated by two-dimensional Poisson
processes on $\R_+\times \R$ and correspond to $r$th order extremal
processes as $r\to\infty$. The continuous time case introduces a different
feature from the discrete index case;  namely, there are always
infinitely many values {\it below\/} your present position. This
necessitates differences in treatment, though both discrete and continuous time analyses rely on the presence of embedded Poisson processes. In continuous time we obtain modifications of Brownian motion limits whereas in discrete time
we obtain Poisson limits for the $r$th order extremes.  
  
  The setup is as follows.
For some numbers $-\infty \leq x_l<x_r \leq \infty$, 
and an infinite measure $\Pi$ on $(x_l,x_r)$ satisfying 
$\Pi (x_l,x_r)=\infty$ and $Q(x):=\Pi(x,x_r)<\infty$ for $x_l<x<x_r$,
let 
\beqq\label{e:prm}
N=\sum_k \epsilon_{(t_k,j_k)},\eeqq
be Poisson random measure on $[0,\infty)\times (x_l,x_r) $,
 with mean measure $\Leb\times \Pi$.
The notation
$\epsilon_{(t,x)}(\cdot) $ denotes a Dirac measure with mass $1$ at
the point $(t,x)$. Sometimes we write $(t_k,j_k) \in \supp (N)$ to
indicate the point $(t_k,j_k)$ is charged by $N$.
We assume $x_l $ and $x_r$ are
not atoms of $\Pi$ and in fact, results are most striking if we assume
$\Pi(\cdot)$ is atomless. (Otherwise, results would be stated in terms
of simplifications of point processes; see \cite{engelen:tommassen:vervaat:1988}.)
 Our
assumptions mean that 
\begin{enumerate}
\item The function $Q(x)$ satisfies $Q(x_r)=0$ and $Q(x_l)=\infty$
  so $Q:(x_l,x_r) \mapsto (0,\infty)$ and $Q(x)$ is non-increasing.
\item For any $t>0$ and $x_r\geq x> x_l: N \bigl( [0,t]\times
  (x,x_r)\bigr)<\infty$ almost surely.
\item For any $t>0$ and $x_r\geq x> x_l: N \bigl( [0,t]\times (x_l,x]
  \bigr)=\infty$ almost surely.
\end{enumerate}

Traditionally, the (first-order) extremal process is defined by
(\cite{
resnickbook:2008, deheuvels:1983, deheuvels:1982b,
dwass:1974,dwass:1966,dwass:1964,resnick:1975,resnick:rubinovitch:1973,
resnick:1974, shorrock:1975,weissman:1975b}),
$$Y(t)=Y^{(1)}(t)=\bigvee_{t_k\leq t} j_k, \quad 0<t<\infty,$$ 
the largest $j_k$ whose $t_k$ coordinate is at or before time $t$.
Alternatively we may write
$$Y(t)=\inf\{x>x_l: N\bigl( [0,t]\times (x,x_r)\bigr)=0\}=
 \inf\{x>x_l: N\bigl( [0,t]\times (x,x_r)\bigr)<1\}.$$
Here we investigate the analogue of Proposition \ref{prop:MrConv} as
$r\to\infty$ for the continuous time $r$th order extremal process 
$\bYr:= \{\Yr (t),0<t<\infty\}$ 
defined as,
\begin{equation}\label{def:Yr}
\Yr (t) :=\inf\{x>x_l: N\bigl( [0,t]\times (x,x_r)\bigr)<r\},\quad t>0.
\end{equation}
This means for $t>0$, $x_r\geq x>x_l$, 
$$[\Yr (t)>x]=[N\bigl( [0,t]\times (x,x_r)
\bigr)\geq r],$$ and therefore,
\begin{equation}\label{e:equiv}
[\Yr (t)\leq x]=[N\bigl( [0,t]\times (x,x_r)
\bigr) <r].
\end{equation}
Alternative ways of considering $\bYr$ are in \cite{engelen:tommassen:vervaat:1988}.

What is the behavior of $\{\bYr , r\geq 1\}$, considered as a sequence
of random elements of c\`adl\`ag space $D(x_l,x_r)$, as $r\to\infty$? This problem differs from the one considered in Section \ref{sec:rMax}
for $\bMr$. Unlike in  
Section \ref{sec:rMax}, there are always infinitely many points below
your current position and thus the left tail condition
\eqref{e:assumeF} used for $\bMr$ must be
different when considering $\bYr$. Analysis of the range of $\bYr$ is more
complicated and for the behavior of $\bYr$ itself, instead of relying on Poisson
behavior, we rely on asymptotic normality.

\subsection{The range $\mathcal{R}_r$ of $\bYr$.}
\label{subsec:rangeY}
Let $\mathcal{R}_r$ be the unique points in the set $\{Y^{(r)}(t),t>0\}$.
As in the discrete time case \eqref{e:rangeM}, we have
\beqq\label{e:rangeY}
\mathcal{R}_r= \bigcup_{p=1}^r\{j_k:(t_k,j_k)\in \supp(N),
\,N([0,t_k]\times [j_k,\infty))=p\}. \eeqq
To verify \eqref{e:rangeY} suppose $x\in \mathcal{R}_r$. There 
exists $t>0$ such that $Y^{(r)}(t) =x,$ and therefore there exists $(t_k,x)\in
\supp(N)$ such that $t_k\leq t$. If $N([0,t_k]\times [x,\infty))>r$,
then
$Y^{(r)}(t)>x$, giving a contradiction.  Thus $x$ is in the right side
of \eqref{e:rangeY}. Conversely, suppose $j_k$ satisfies that there exists $t_k$
such that $(t_k,j_k)\in \supp(N)$ and  $
N([0,t_k]\times [j_k,\infty))=p$ for some $p\leq r$. Then there exists
$t>t_k$ such that 
$N(t_k,t]]\times [j_k,\infty))=r-p$ and thus
$Y^{(r)}(t)=j_k$. Therefore, $j_k $ belongs to the left side of
\eqref{e:rangeY}. \halmos

When $\Pi$ is atomless, the range of $Y(t)=Y^{(1)}(t)$ is known to be
a Poisson process with mean measure determined by the monotone
function $S(x):=-\log \Pi(x,\infty), x>x_l$. This is discussed,
for example, in \cite[page 183]{resnickbook:2008}. In fact, from
\cite[Theorem 6.2, page 234]{engelen:tommassen:vervaat:1988}, the
$p$-records of $N$ are iid in $p$, and each sequence of $p$-records
forms PRM($S$). (A $p$-record of $N$ is a point $j_k$ such that there
exists $t_k$ making $(t_k,j_k)\in \supp(N)$ and $N([0,t_k]\times
[j_k,\infty))=p$.)  This and \eqref{e:rangeY} allow us to conclude
that $\mathcal{R}_r$ is a Poisson process with mean measure
$rS(\cdot)$. 
This achieves the continuous time analogue of the
discrete time discussion at the beginning of Subsection
\ref{sec:limRange},  and without any normalization we have
$$\mathcal{R}_r \Rightarrow \supp(S), \quad (r\to\infty),$$
in the Fell topology of closed subsets of $(x_l,x_r)$.

Paralleling the discrete time analysis, we next proceed by obtaining a
non-degenerate limit for $\mathcal{R}_r$.
We have to be more careful in the continuous case.
 The reason is that $\mathcal{R}_r$
is PRM with mean measure $rS(\cdot)$ and $S$ is Radon on $(x_l,x_r),$
and it may allocate infinite mass to a neighbourhood of both $x_l$ and
$x_r$. Recall $S(x):=-\log \Pi(x,x_l] $ satisfies $S(x_l)=-\infty$
and $S(x_r)=\infty$.

Assume without loss of generality that $x_l<0<x_r$. (If this is not
the case, choose an arbitrary point between $x_l$ and $x_r$.)  We make
a treatment parallel to the discrete one 
by splitting the Poisson points of $\mathcal{R}_r$ into those
above $0$ and those below. So write
$$\mathcal{R}_r=\mathcal{R}_r^+ \bigcup \mathcal{R}_r^-$$
where $\mathcal{R}_r^+ $ are the positive 
Poisson points of $\mathcal{R}_r$
and $\mathcal{R}_r^-$ are the negative points of $\mathcal{R}_r$.
 The two Poisson processes $\mathcal{R}_r^\pm$ are
independent because their points are in disjoint regions. Define
the two non-decreasing functions on $\R_+$,
\begin{align}
S^+(x)=&S(0,x]=S(x)-S(0),\quad &&0<x\leq x_r  \label{e:Sp}\\
S^-(x)=&S[-x,0)=S(0)-S(-x),\quad &&0<x \leq  -x_l . \label{e:Sm}
\end{align}
Assume there exist $a^\pm (t)>0,\,b^\pm (t) \in \R$ and infinite Radon measures
$S_\infty^\pm$ on $\R_+$ such that
such that as $t\to\infty$,
\begin{align}
&tS^+(a^+(t) x- b^+(t)) \to S_\infty^+ (x),\label{e:SpConv}\\
&tS^-(a^-(t) x- b^-(t)) \to S_\infty^- (x).\label{e:SmConv}
\end{align}

The form of $S_\infty^\pm$ is determined by defining  probability
distribution tails $\bar H^\pm (x)$ by
\begin{align}
&\bar H^+(x)=e^{-S^+(x)} , \quad 0<x<x_r,\label{e:Hp}\\
&\bar H^-(x)=e^{-S^-(x)} , \quad 0<x<-x_l.\label{e:Hm}
\end{align}
Note $\bar H^\pm (0)=e^{-S^\pm (0)} =e^{-0} =1$ and 
$\bar H^+ (x_r)=e^{-S^+ (x_r)} =e^{-\infty} =0$ and  $\bar
H^-(-x_l)=0$, similarly. Then, as in the discussion following
\eqref{e:assume}, we find for $\gamma^\pm \in \R$ that
$$e^{-S^\pm (x)} =G_{\gamma^\pm} (-x),$$ 
where $G_\gamma(x)$ has a form given after \eqref{e:max}. Note, if we
want 
$$a^+(t) = a^-(t) \quad \text{and} \quad b^+(t) = b^-(t)$$
up to convergence of types, we would need \cite{resnick:1971},
$-x_l=x_r$ and 
$$\bar H^+(x) \sim \bar H^-(x) \quad (x \to x_r ).$$

We now summarize.
\begin{Theorem}
The two Poisson processes $\mathcal{R}_r^\pm$ are independent with 
$\mathcal{R}_r =\mathcal{R}_r^+ \cup \mathcal{R}_r^-$  where
$\mathcal{R}_r^+$ has mean measure $rS^+$ on $\R_+$ and 
$-\mathcal{R}_r^-$ has mean measure $rS^-$ on $\R_+$ so that
$\mathcal{R}_r^-$ are points on $(-\infty,0).$ As $r\to\infty$, the
range centered and scaled converges to a limiting Poisson process,
$$\Bigl(
\frac{\mathcal{R}_r^+ +b^+(r)}{a^+(r)}, \frac{ -\mathcal{R}_r^+
  +b^-(r)}{a^-(r)} \Bigr)
\Rightarrow \Bigl(\mathcal{R}_\infty^+, -\mathcal{R}_\infty^-\Bigr),
$$
where the limits are independent Poisson processes on $\R_+$ with mean
measures $S_\infty^\pm$.  So if \eqref{e:SpConv} and \eqref{e:SmConv}
hold,
centering positive and negative range
points appropriately leads to a limiting Poisson process such that
positive points have mean measure $S^+_\infty(\cdot)$ and negative
range points made positive by taking absolute values have mean measure $S_\infty^-(\cdot).$
\end{Theorem}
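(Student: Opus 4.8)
The plan is to assemble the theorem from pieces already established in the surrounding discussion, treating it essentially as a consolidation of the representation in \eqref{e:rangeY}, the Ignatov-type record structure, and the domain-of-attraction hypotheses \eqref{e:SpConv}--\eqref{e:SmConv}. First I would recall from the paragraph following \eqref{e:rangeY} that, because $\Pi$ is atomless, the $p$-records of $N$ are iid in $p$ with each record sequence forming $\text{PRM}(S)$; combining this with the range identity \eqref{e:rangeY} shows that $\mathcal{R}_r$ is $\text{PRM}(rS)$ on $(x_l,x_r)$. The splitting $\mathcal{R}_r=\mathcal{R}_r^+\cup\mathcal{R}_r^-$ into points above and below $0$ then decomposes the mean measure $rS$ as the restriction of $rS$ to $\R_+$ and to $(-\infty,0)$ respectively; since a Poisson random measure restricted to disjoint Borel sets yields independent Poisson random measures, $\mathcal{R}_r^+$ and $\mathcal{R}_r^-$ are independent. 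Rewriting the restricted mean measures in terms of $S^+$ and $S^-$ as defined in \eqref{e:Sp}--\eqref{e:Sm}, and reflecting the negative points through the origin, gives that $\mathcal{R}_r^+$ has mean measure $rS^+$ on $\R_+$ and $-\mathcal{R}_r^-$ has mean measure $rS^-$ on $\R_+$.

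Next I would address the scaling limit. The key tool is the standard fact, already invoked in Subsection \ref{sec:limRange}, that a family of Poisson random measures converges weakly if and only if the corresponding mean measures converge vaguely. Applying the affine maps $x\mapsto (x+b^+(r))/a^+(r)$ and $x\mapsto (-x+b^-(r))/a^-(r)$ transports the points of $\mathcal{R}_r^\pm$ and, by the transformation theory for Poisson processes, transports the mean measures correspondingly; the hypotheses \eqref{e:SpConv} and \eqref{e:SmConv} are precisely the statements that these transported mean measures converge vaguely on $\R_+$ to the infinite Radon measures $S_\infty^\pm$. Hence each scaled process converges weakly to $\text{PRM}(S_\infty^\pm)$, and since the pre-limit processes $\mathcal{R}_r^+$ and $-\mathcal{R}_r^-$ are independent for every $r$, the joint weak limit factors, giving the stated joint convergence to a pair of independent Poisson processes with mean measures $S_\infty^\pm$.

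The only genuinely delicate point, and the step I expect to be the main obstacle, is ensuring vague convergence on the half-line $\R_+$ rather than mere pointwise convergence of the defining functions. Since $S_\infty^\pm$ are infinite Radon measures, mass can accumulate near the origin (corresponding to $a^\pm(r)x-b^\pm(r)$ approaching the endpoints $x_r$ or $x_l$), so one must verify that on every compact subset of $\R_+$ bounded away from $0$ the measures $rS^\pm(a^\pm(r)\,\cdot\,-b^\pm(r))$ converge to finite limits and place no escaping mass; the atomlessness of $\Pi$ guarantees there is no atom at the relevant endpoints, which is what rules out a degenerate point-process limit, exactly as noted after \eqref{e:assume} in the discrete case. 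Apart from this care with the topology, the argument is a direct translation of the discrete-time development in Subsection \ref{sec:limRange}, so the remaining verifications are routine and I would present them compactly.
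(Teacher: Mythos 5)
Your proposal is correct and follows essentially the same route as the paper: the paper presents this theorem as a summary ("We now summarize") of exactly the pieces you assemble — the range identity \eqref{e:rangeY} plus the iid $p$-record structure giving $\mathcal{R}_r=\text{PRM}(rS)$, independence of $\mathcal{R}_r^\pm$ from restriction to disjoint regions, and the hypotheses \eqref{e:SpConv}--\eqref{e:SmConv} combined with the standard "Poisson processes converge weakly iff mean measures converge" fact already invoked in Subsection \ref{sec:limRange}. Your added care about vague convergence near the endpoints is a reasonable refinement of what the paper leaves implicit, not a departure from its argument.
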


\subsection{Finite dimensional convergence of $\bYr$ as random elements
  of $D(x_l,x_r)$.}\label{sec:fidiconvY}
In this subsection, we give a left-tail condition on $\Pi(\cdot)$
guaranteeing finite dimensional convergence of $\bYr$ to a transformed
Brownian motion.

Suppose there exist normalizing functions $a(r)>0, \; b(r)
\in \R$, and a non-decreasing limit function $h(x) \in \R$  with at least two
points of increase such that for $a(r)x+b(r) \in (x_l,x_r)$,
\begin{equation}\label{e:LTcondit}
\lim_{r\to\infty}
\frac{ r-Q\bigl(a(r)x+b(r) \bigr)}{\sqrt r}=h(x).
\end{equation}

Implications:
\begin{enumerate}
\item If we divide in \eqref{e:LTcondit} by $r$ instead of $\sqrt r$,
  the limit will be $0$ and therefore,
\begin{equation}\label{e:sim}
Q\bigl(a(r)x+b(r) \bigr) \sim r,\quad (r\to\infty).
\end{equation}
Therefore, since $r\to \infty$, we must have that
$Q\bigl(a(r)x+b(r)\bigr) \to \infty$ and
$(x_l,x_r)\ni a(r)x+b(r) \to x_l$.

\item For any $t>0$, 
\begin{align}
\frac{ r-tQ\bigl(a(r/t)x+b(r/t) \bigr)}{\sqrt r}=&
t\Bigl(
\frac{ r/t-Q\bigl(a(r/t)x+b(r) \bigr)}{\sqrt{ r/t} \sqrt t}  \Bigr)
                                                   \nonumber \\
\to&  \sqrt t  h(x), \quad (r\to\infty).\label{e:sqrt}\end{align}
\item If we write $r-Q=(\sqrt r - \sqrt Q)(\sqrt r + \sqrt Q)$ and use
  \eqref{e:sim}, we get
\begin{equation}\label{e:notQuite}
\sqrt r -\sqrt{ Q(a(r)x+b(r))} \to \frac 12 h(x).\end{equation}
Remember that $Q$ is decreasing and define a probability distribution
function $G(x)$ by $G(x):=\exp\{-\sqrt{Q(x)}\}$ so that $G$
concentrates on $(x_l,x_r)$. Then exponentiate in 
\eqref{e:notQuite} to get
$$e^{\sqrt r} e^{-\sqrt{Q(a(r)x+b(r))} } \to e^{\frac 12 h(x)},
\quad (r\to\infty)$$
or after a change of variables $s=e^{\sqrt r}$,
\begin{equation}
sG\bigl(a((\log s)^2)x+b((\log s)^2)\bigr)
=se^{-\sqrt{Q(a((\log s)^2)x+b((\log s)^2))} } \to e^{\frac 12 h(x)},
\end{equation}
as $s\to\infty$.
So we conclude that $G(x):=e^{-\sqrt{Q(x)}}$ is in a domain of
  attraction of an extreme value distribution for minima. This
  technique is essentially the same as the one used to study limit
  laws for record
  values in \cite{resnick:1973} or \cite{resnickbook:2008}.
\item Form of $h(x)$: As we saw following \eqref{e:assumeF}, if $\exp\{\frac{1}{2} h(x)\}$ plays the role of $g(x)$
then $h(x)$ must be of the form
$$e^{\frac 12 h(x)}=-\log G_\gamma (-x),$$
where $G_\gamma$ is an extreme value distribution for maxima of the
form
$$G_\gamma(x)=\exp\{ -(1+\gamma x)^{-1/\gamma}     \},\quad \gamma \in
\R, \,1+\gamma x>0.$$
So 
\beqq
\label{e:h}
\frac 12 h(x)= \begin{cases}
-\frac{1}{\gamma} \log (1-\gamma x),& \text{ if }\gamma \neq
0,\,1-\gamma x>0,\\
x,& \text{ if } \gamma=0,\,x\in\R.
\end{cases}
\eeqq
Observe that $h: \supp _\gamma \mapsto \R$ and 
$h^\leftarrow : \R \mapsto \supp_\gamma.$
Recalling the definition of $\text{supp}_\gamma$ from
\eqref{e:defsuppgamma}, we have
\begin{align*}
\text{supp}_\gamma=&\{x\in \R: 1-\gamma x>0\}\\
=&
\begin{cases}
(-\frac{1}{|\gamma |},\infty),& \text{ if }\gamma<0,\\
(-\infty,\frac{1}{|\gamma|},& \text{ if } \gamma >0,\\
\R, & \text{ if }\gamma=0.
\end{cases}
\end{align*}
\end{enumerate}

We apply these findings to obtain a marginal limit distribution for $Y^{(r)}(t)$ under the left tail  condition. Assume \eqref{e:LTcondit}.
We show that, for fixed $t$,  $\Yr (t)$
has a limit distribution as $r\to\infty$, after centering and norming.
This relies on an elementary fact: if $\{N_n \}$ is  a family of
Poisson random variables with $E(N_n)\to\infty$ then
\begin{equation}\label{e:an}
\frac{N_n -E(N_n)}{\sqrt{\text{Var}(N_n)}}\Rightarrow N(0,1),\quad
  (n \to \infty).\end{equation}

From \eqref{e:equiv}, we have
\begin{align*}
P\Bigl[&\frac{\Yr (t) -b(r/t)}{a(r/t)} \leq x \Bigr] =
P[ N([0,t]\times (a(r/t)x+b(r/t),\infty)) <r]\\
=&P[ \frac{N([0,t]\times (a(r/t)x+b(r/t),\infty)) -tQ(a(r/t)x+b(r/t))}
{\sqrt r}
 <\frac{r- tQ(a(r/t)x+b(r/t))}{\sqrt r}].
\end{align*}
From \eqref{e:sim}, $\sqrt r$ is asymptotic to the standard deviation
of the Poisson random variable and so the left side random variable
converges to a $N(0,1)$ random variable. Using \eqref{e:sqrt}, 
the right side converges to $\sqrt t h(x).$ We therefore conclude that
under the left tail condition \eqref{e:LTcondit}, for any fixed $t>0$,
\begin{equation}\label{e:onedim}
\lim_{r\to\infty} P\Bigl[\frac{\Yr (t) -b(r/t)}{a(r/t)} \leq x \Bigr] =
\Phi\bigl(\sqrt t h(x)\bigr),\quad x\in \supp_\gamma ,
\end{equation}
where $\Phi(x)$ is the standard normal cdf.

Now we can prove  the following finite dimensional convergence.

\begin{Proposition}\label{prop:bmlim}
Assume \eqref{e:LTcondit} holds with $h(x)$ given in \eqref{e:h}. Let $\{B(t),t\geq 0\}$ be standard
Brownian motion. Then as $r\to\infty$,
\begin{equation}\label{e:fidiconv}
\frac{\Yr (t) -b(r/t)}{a(r/t)} 
\Rightarrow
h^\leftarrow   \Bigl(
\frac{B(t)}{t}
\Bigr),
\end{equation}
in the sense of convergence of finite dimensional distributions for $t>0$.
\end{Proposition}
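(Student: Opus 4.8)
The plan is to leverage the equivalence \eqref{e:equiv} to rewrite each event $[\Yr(t)\le x]$ in terms of a Poisson counting variable, and then show that the vector of these counting variables, after centering and scaling by $\sqrt r$, converges to a Gaussian vector whose structure encodes a time-changed Brownian motion. Concretely, for fixed times $0<t_1<\dots<t_d$ and levels $x_1,\dots,x_d\in\supp_\gamma$, write $a_i:=a(r/t_i)$, $b_i:=b(r/t_i)$, and consider the standardized counts
\begin{equation*}
Z_i^{(r)}:=\frac{N\bigl([0,t_i]\times(a_ix_i+b_i,\infty)\bigr)-t_iQ(a_ix_i+b_i)}{\sqrt r},\qquad i=1,\dots,d.
\end{equation*}
By the same computation that produced \eqref{e:onedim}, the event $[(\Yr(t_i)-b_i)/a_i\le x_i]$ equals $[Z_i^{(r)}<(r-t_iQ(a_ix_i+b_i))/\sqrt r]$, and by \eqref{e:sqrt} the deterministic threshold on the right converges to $\sqrt{t_i}\,h(x_i)$. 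Thus the entire finite-dimensional statement reduces to establishing a multivariate central limit theorem for $(Z_1^{(r)},\dots,Z_d^{(r)})$ together with identification of the limiting covariance.

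The first main step is to decompose the counting variables over disjoint time strata so as to exploit the independence of $N$ on disjoint regions. Writing $[0,t_i]=\bigcup_{j\le i}(t_{j-1},t_j]$ with $t_0:=0$, each $N([0,t_i]\times A_i)$ is a sum of independent Poisson contributions over the strips $(t_{j-1},t_j]\times A_i$. The key analytic input is that the sets $A_i:=(a_ix_i+b_i,\infty)$ are, by Implication (1), all shrinking toward $x_l$, so $Q$ evaluated at their lower endpoints blows up like $r/t_i$; this forces each individual Poisson mean to tend to infinity, so that \eqref{e:an} applies stratum by stratum. Because the levels $a_ix_i+b_i$ need not be nested in $i$, I would handle the overlaps by passing to the common refinement of the half-lines $A_i$ and expressing each $Z_i^{(r)}$ as a linear combination of independent standardized Poisson increments indexed by the resulting rectangles; a Lyapunov or Lindeberg check (using that Poisson third moments are controlled by their means, which are $O(r)$ while the normalization is $\sqrt r$) then delivers joint asymptotic normality of the increments, hence of $(Z_1^{(r)},\dots,Z_d^{(r)})$.

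The second main step, and the one I expect to be the genuine obstacle, is computing the limiting covariance and recognizing it as that of $B(t)/t$ pushed through $h^\leftarrow$. By independence of $N$ on disjoint strips, $\Cov(Z_i^{(r)},Z_j^{(r)})$ equals $r^{-1}$ times the mean of $N$ on the overlapping region $[0,t_i\wedge t_j]\times(A_i\cap A_j)$, i.e.\ $r^{-1}(t_i\wedge t_j)Q\bigl(\text{(larger endpoint)}\bigr)$. Using \eqref{e:sim}, each such $Q$ is asymptotic to $r$ divided by the relevant time argument, and careful bookkeeping should yield $\Cov(Z_i^{(r)},Z_j^{(r)})\to (t_i\wedge t_j)/\sqrt{t_it_j}$. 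This is exactly the covariance of the Gaussian vector $(B(t_i)/\sqrt{t_i})_i$, so the limiting thresholds $\sqrt{t_i}\,h(x_i)$ combined with this covariance identify the joint limit of $((\Yr(t_i)-b_i)/a_i)_i$ as that of $(h^\leftarrow(B(t_i)/t_i))_i$, via the relation $[B(t_i)/\sqrt{t_i}\le \sqrt{t_i}\,h(x_i)]=[B(t_i)/t_i\le h(x_i)]=[h^\leftarrow(B(t_i)/t_i)\le x_i]$, using monotonicity of $h$.

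Finally I would assemble these pieces: joint asymptotic normality of the standardized counts (Step one), the covariance identification $(t_i\wedge t_j)/\sqrt{t_it_j}$ (Step two), and the continuous-mapping/monotonicity argument converting the Gaussian event back into the event $[h^\leftarrow(B(t)/t)\le x]$. The subtle point throughout is that the spatial levels are $i$-dependent and non-nested, so the covariance does \emph{not} reduce to a single one-dimensional computation; the honest work lies in the refinement-of-strips decomposition and in verifying that the endpoints entering $Q$ in the overlap terms are asymptotically governed by the smaller time index, which is what produces the $t_i\wedge t_j$ numerator. Once the covariance is pinned down, the passage to $h^\leftarrow(B(t)/t)$ is immediate from the monotonicity of $h$ and the representation of $B(t)/t$ as a centered Gaussian process with the stated covariance.
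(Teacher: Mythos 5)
Your overall strategy is the same as the paper's: rewrite $[\Yr(t_i)\le x_i]$ via \eqref{e:equiv} as an event about a Poisson count, normalize by $\sqrt r$, note that the thresholds converge to $\sqrt{t_i}\,h(x_i)$ by \eqref{e:sqrt}, decompose the counts over disjoint strips, and invoke a Poisson CLT. The genuine gap is in the step you yourself identify as the crux. Your covariance formula is correct: for $t_i<t_j$ the higher spatial level is the one indexed by the \emph{larger} time (since $Q$ at level $i$ is $\sim r/t_i$ and $Q$ is non-increasing), so
\begin{equation*}
\Cov\bigl(Z_i^{(r)},Z_j^{(r)}\bigr)
=\frac{(t_i\wedge t_j)\,Q\bigl(a(r/t_j)x_j+b(r/t_j)\bigr)}{r}.
\end{equation*}
But \eqref{e:sim}, applied at argument $r/t_j$, gives $Q\bigl(a(r/t_j)x_j+b(r/t_j)\bigr)\sim r/t_j$, so this expression converges to $(t_i\wedge t_j)/(t_i\vee t_j)$, \emph{not} to $(t_i\wedge t_j)/\sqrt{t_it_j}$ as you assert; the two agree only when $t_i=t_j$. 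This is not cosmetic: the bivariate normal cdf at fixed finite thresholds is strictly increasing in the correlation, so the two candidate limits are genuinely different probabilities. With the correlation your formula actually produces, together with the thresholds $\sqrt{t_i}\,h(x_i)$, the limit is identified as the law of $h^\leftarrow\bigl(B(t^2)/t^{3/2}\bigr)$ (note $B(t_i^2)/t_i$ is standard normal with $\Cov\bigl(B(t_1^2)/t_1,B(t_2^2)/t_2\bigr)=t_1/t_2$), which has the marginals \eqref{e:onedim} but not the joint laws of $h^\leftarrow(B(t)/t)$. In other words, your write-up reverse-engineers the covariance needed for \eqref{e:fidiconv} instead of deriving it; carried out honestly, your method does not deliver the stated limit.

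For comparison, the paper's proof splits the two counts into pieces $N_1,\dots,N_4$ and claims $(N_2-\E N_2)/\sqrt r\Rightarrow\sqrt{t_1}Z_1$, $(N_3-\E N_3)/\sqrt r\Rightarrow\sqrt{t_1}Z_1$ (the \emph{same} $Z_1$), and $(N_4-\E N_4)/\sqrt r\Rightarrow\sqrt{t_2-t_1}\,Z_2$, i.e.\ limiting variances $t_1,t_1,t_2-t_1$ and a perfect asymptotic coupling of $N_2$ and $N_3$. That bookkeeping conflicts with \eqref{e:sim}: the means of $N_2,N_3,N_4$ are asymptotic to $r$, $rt_1/t_2$, $r(t_2-t_1)/t_2$, so the $\sqrt r$-normalized variances tend to $1$, $t_1/t_2$, $1-t_1/t_2$ (exactly as in the paper's own marginal computation preceding \eqref{e:onedim}, where the standard deviation of the count is $\sim\sqrt r$); moreover $N_2-N_3$ is a strip count with mean of order $r(1-t_1/t_2)$, so its normalized fluctuation is not negligible and $N_2,N_3$ cannot converge to one and the same Gaussian variable. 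So you and the paper reach \eqref{e:fidiconv} by different faulty covariance computations: yours by asserting the limit $(t_i\wedge t_j)/\sqrt{t_it_j}$ that your own (sound) formula contradicts, the paper's by evaluating \eqref{e:sim} at argument $r$ where the argument is $r/t$. The defect in your proposal is therefore not repairable as a proof of the proposition as stated: the "careful bookkeeping" step fails, and what it correctly produces is the correlation $(t_i\wedge t_j)/(t_i\vee t_j)$, hence the limit process $h^\leftarrow\bigl(B(t^2)/t^{3/2}\bigr)$ rather than $h^\leftarrow\bigl(B(t)/t\bigr)$.
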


\begin{proof}
We illustrate the proof by showing bivariate
pairs converge for two values of $t$. So suppose $0<t_1<t_2$ and $x_1<x_2$ are in
$\supp_\gamma$ and we show as $r\to\infty$,
\begin{align}
 P\Bigl[\frac{\Yr (t_i) -b(r/t_i)}{a(r/t_i)} \leq x_i ;\,i=1,2\Bigr] 
&\to
 P\Bigl[h^\leftarrow \Bigl(\frac{B(t_i)}{t_i} \Bigr) \leq x_i
 ;\,i=1,2\Bigr]\nonumber \\
&=P\bigl[ B(t_i) \leq t_ih(x_i);\,i=1,2]. \label{e:pleaseShow}
\end{align}
We express the statements about $\bYr$ in terms of the Poisson
counting measure and consider:
\begin{align*}
&\begin{pmatrix}
N\bigl([0,t_1]\times  (a(r/t_1)x_1+b(r/t_1),\infty)\bigr)\\
N\bigl([0,t_2]\times (a(r/t_2)x_2+b(r/t_2),\infty)\bigr)
\end{pmatrix}\\
&\quad =
\begin{pmatrix}
N\bigl([0,t_1]\times (a(r/t_1)(x_1,x_2]+b(r/t_1),\infty)\bigr)
+N\bigl([0,t_1]\times (a(r/t_1)x_2+b(r/t_1),\infty)\bigr)\\
N\bigl([0,t_1]\times (a(r/t_2)x_2+b(r/t_2),\infty)\bigr)
+N\bigl((t_1,t_2]\times (a(r/t_2)x_2+b(r/t_2),\infty)\bigr)
\end{pmatrix}\\
&\quad= \begin{pmatrix}
N_1+N_2\\
N_3+N_4\end{pmatrix}.
\end{align*}
Consider the four terms $N_i, \,i=1,\dots,4$ in turn. 
\begin{enumerate}
\item The term $N_1$
appropriately normed converges to $0$,
\begin{equation}\label{e:oy}
\frac{N\bigl([0,t_1]\times
  (a(r/t_1)(x_1,x_2]+b(r/t_1),\infty)\bigr) -t_1\Pi(a(r/t_1)(x_1,x_2]+b(r/t_1))}{\sqrt r}
\Rightarrow 0.
\end{equation}
The reason is that the centering is 
\begin{align*}
\frac{t_1\Pi(a(r/t_1( x_1,x_2])}{\sqrt r}
=&\frac{t_1Q(ax_1+b)-t_1Q(ax_2+b)}{\sqrt r}\\
=&\frac{r-t_1Q(ax_2+b)}{\sqrt r}- \frac{r-t_1Q(ax_1+b)}{\sqrt r}\\
\to & \sqrt{t_1}(h(x_2)-h(x_1))>0.
\end{align*}
So the left side of  \eqref{e:oy} is of the form
$(N_r-\lambda_r)/\sqrt r$ where $\lambda_r/\sqrt r \to c>0$ and thus
$$\text{Var}\Bigl( (N_r-\lambda_r)/\sqrt r\Bigr)=\lambda_r/r\to 0,$$
which verifies the convergence to $0$ in \eqref{e:oy}.
\item The term $N_2$ becomes asymptotically normal. Let $Z_1$ be a
  standard normal random variable and apply \eqref{e:an} and
  \eqref{e:sim} to get
$$
\frac{N\bigl([0,t_1]\times (a(r/t_1)x_2+b(r/t_1),\infty)\bigr)
  -t_1Q(a(r/t_1)x_2+b(r/t_1)) }{\sqrt r}
 \Rightarrow  \sqrt{t_1} Z_1.
$$
\item For $N_3$, despite its dependence on the variable $t_2$, we also
  find
$$
\frac{N\bigl([0,t_1]\times (a(r/t_2)x_2+b(r/t_2),\infty)\bigr)
  -t_1Q(a(r/t_2)x_2+b(r/t_2)) }{\sqrt r}
 \Rightarrow  \sqrt{t_1} Z_1.
$$
This result uses a combination of the reasoning that was used for
$N_1,N_2$.
\item The term $N_4$ is independent of $N_1,N_2,N_3$ so there is a
  standard normal variable $Z_2\independent Z_1$ and 
$$\frac{N\bigl(
(t_1,t_2]\times (a(r/t_2)x_2+b(r/t_2),\infty)\bigr)
  -(t_2-t_1)Q(a(r/t_2)x_2+b(r/t_2)) }{\sqrt r}
 \Rightarrow  \sqrt{t_2-t_1} Z_2.
$$
\end{enumerate}

We conclude from this carving that
\begin{align*}
&
\begin{pmatrix}
\displaystyle{
\frac{N\bigl([0,t_1]\times  (a(r/t_1)x_1+b(r/t_1),\infty)\bigr)
  -t_1Q(a(r/t_1)x_1+b(r/t_1))}{\sqrt r} }\\
  \displaystyle{
\frac{N\bigl([0,t_2]\times
  (a(r/t_2)x_2+b(r/t_2),\infty)\bigr)-t_2Q(a(r/t_2)x_2+b(r/t_2))}{\sqrt
r}}
\end{pmatrix} \cr
&\cr
&\hskip2cm \Rightarrow
\begin{pmatrix}
\sqrt{t_1}Z_1\\
\sqrt{t_1}Z_1+\sqrt{t_2-t_1}Z_2
\end{pmatrix},
\end{align*}
as $r\to\infty$. 
Use \eqref{e:equiv} to write,
\begin{align*}
&P \Biggl[
\begin{pmatrix}
\displaystyle{\frac{\Yr(t_1)-a(r/t_1)}{b(r/t_1)}}\\
\displaystyle{\frac{\Yr(t_2)-a(r/t_2)}{b(r/t_2)}}\\
\end{pmatrix}
\leq  \begin{pmatrix}
x_1\\
x_2
\end{pmatrix}\Biggr]\\
&\cr
&\qquad=
P\Biggl[
\begin{pmatrix}
\displaystyle{\frac{N\bigl([0,t_1]\times  (a(r/t_1)x_1+b(r/t_1),\infty)\bigr)
  -t_1Q(a(r/t_1)x_1+b(r/t_1))}{\sqrt r}}\\
\displaystyle{\frac{N\bigl([0,t_2]\times
  (a(r/t_2)x_2+b(r/t_2),\infty)\bigr)-t_2Q(a(r/t_2)x_2+b(r/t_2))}{\sqrt
r}}
\end{pmatrix}\cr
&\cr
&\qquad <
\begin{pmatrix}
\displaystyle{\frac{r-t_1Q(a(r/t_1)x_1+b(r/t_1))}{\sqrt r}}\\
\displaystyle{\frac{r-t_2Q(a(r/t_2)x_2+b(r/t_2))}{\sqrt r}}\\
\end{pmatrix}\\
&\qquad
 \to P[\sqrt{ t_1}Z_1 \leq t_1h(x_1),
  \sqrt{t_1}Z_1+\sqrt{t_2-t_1}Z_2 \leq t_2 h(x_2)]\ {\rm (as}\ r\to\infty)\\
&\qquad
=P\bigl[\frac{B(t_1)}{t_1}\leq h(x_1),\frac{B(t_2)}{t_2}\leq h(x_2)\bigr]\\
&\qquad
=P\bigl[h^\leftarrow \bigl(\frac{B(t_1)}{t_1} \bigr)\leq x_1,h^\leftarrow \bigl(\frac{B(t_2)}{t_2}
  \bigr)\leq x_2\bigr].
\end{align*}
This verifies \eqref{e:pleaseShow}.
\end{proof}

\section{Final thoughts}\label{sec:final}
The results of this paper suggest some obvious questions the answers
to which have so far eluded us. Is there a jump process 
limit -- presumably some sort of extremal process -- in \eqref{e:fidiconv}
corresponding to some sort of Poisson limit regime as opposed to the
Brownian motion limit regime? In Proposition \ref{prop:bmlim} is a
stronger form of convergence -- say in the $J_1$-topology -- possible? And
so far, the mathematics of proving in a nice
way that $\{\bYr, r\geq 1\}$ is Markov
in the {\cadlag}  space $D(0,\infty)$  has not cooperated.

\bibliography{bibfile}
\end{document}